\pgfplotsset{compat=1.15}
\def\R {\mathbb{R}}
\def\C {\mathbb{C}}
\def\Q {\mathds{Q}}
\def\N {\mathds{N}}
\def\eps {\varepsilon}
\DeclareMathOperator{\Prob}{\mathds{P}}
\DeclareMathOperator{\Esp}{\mathbb{E}}
\DeclareMathOperator{\diag}{Diag}
\DeclareMathOperator{\vect}{Span}
\DeclareMathOperator{\ima}{Im}  
\DeclareMathOperator{\Id}{Id}   
\DeclareMathOperator*{\argmax}{arg\, \max}
\DeclareMathOperator{\sgn}{sgn}  
\DeclareMathOperator{\re}{\mathfrak{Re}}
\newcommand*{\transp}[1]{\ensuremath{\prescript{t}{}{#1}}}
\newcommand{\BEQ} {\begin{equation} }
  \newcommand{\EEQ} {\end{equation} }
\newcommand{\BTHM}{\begin{theorem}}
  \newcommand{\ETHM}{\end{theorem}}
  \newcommand{\BL}{\begin{lemma}}
  \newcommand{\EL}{\end{lemma}}
\newtheorem{theorem}{\textsc{Theorem}}[section]
\newtheorem{lemma}[theorem]{\textsc{Lemma}}
\newtheorem*{theorem*}{\textsc{Theorem}}
\newtheorem*{lemma*}{\textsc{Lemma}}
\newtheorem*{corollary*}{\textsc{Corollary}}
\newtheorem*{prop*}{\textsc{Proposition}}
\theoremstyle{definition}
\newtheorem{assumption}{\textsc{Assumption}}[section]
\newtheorem*{assumption*}{\textsc{Assumption}}
\newtheorem*{example*}{\textsc{Example}}
\newtheorem*{examples*}{\textsc{Examples}}
\newtheorem{remark}{\textsc{Remark}}[section]
\newtheorem*{remark*}{\textsc{Remark}}
\newtheorem*{notation}{\textsc{Notation}}
\newtheorem*{not&def}{\textsc{Notation and definitions}}
\newtheorem*{app*}{\textsc{Application}}
\newtheorem{proper}{\textsc{Property}}[section]
\newtheorem*{proper*}{\textsc{Property}}
\newtheorem*{propers*}{\textsc{Properties}}
\newcommand{\limn}[1]{\underset{n \to +\infty}{\lim}}
\newcommand{\limx}[1]{\underset{x \to +\infty}{\lim}}
\newcommand{\limt}[1]{\underset{t \to +\infty}{\lim}}
\newcommand{\tendto}[1]{\mathop{\longrightarrow}\limits_{#1 \to +\infty}}
\newcommand{\suite}[2][n]{( #2_{#1} )_{#1 \in \N} }
\newcommand{\suit}[2][n]{( #2 )_{#1 \in \N} }
\newcommand{\pat}[1]{\left( #1 \right)}
\newcommand{\bigpat}[1]{\bigl( #1 \bigr)}
\newcommand{\Bigpat}[1]{\Bigl( #1 \Bigr)}
\newcommand{\Biggpat}[1]{\Biggl( #1 \Biggr)}
\newcommand{\cro}[1]{\left[ #1 \right]}
\newcommand{\acc}[1]{\left\{ #1 \right\}}
\newcommand{\bigacc}[1]{\bigl\{ #1 \bigr\}}
\newcommand{\dua}[1]{\left< #1 \right>}
\newcommand{\abs}[1]{\left| #1 \right|}
\newcommand{\bigabs}[1]{\bigl| #1 \bigr|}
\newcommand{\nor}[2]{\left\| {#1} \right\|_{#2}}
\newcommand{\bignor}[2]{\bigl\| {#1} \bigr\|_{#2}}
\newcommand{\vertiii}[1]{{\left\vert\kern-0.25ex\left\vert\kern-0.25ex\left\vert #1 \right\vert\kern-0.25ex\right\vert\kern-0.25ex\right\vert}}
\DeclarePairedDelimiterX{\inner}[2]{\langle}{\rangle}{#1, #2}
\newcommand{\eint}[1]{\llbracket #1 \rrbracket}
\def\nbQ{{\mathchoice {\setbox0=\hbox{$\displaystyle\rm
Q$}\hbox{\raise
0.15\ht0\hbox to0pt{\kern0.4\wd0\vrule height0.8\ht0\hss}\box0}}
{\setbox0=\hbox{$\textstyle\rm Q$}\hbox{\raise
0.15\ht0\hbox to0pt{\kern0.4\wd0\vrule height0.8\ht0\hss}\box0}}
{\setbox0=\hbox{$\scriptstyle\rm Q$}\hbox{\raise
0.15\ht0\hbox to0pt{\kern0.4\wd0\vrule height0.7\ht0\hss}\box0}}
{\setbox0=\hbox{$\scriptscriptstyle\rm Q$}\hbox{\raise
0.15\ht0\hbox to0pt{\kern0.4\wd0\vrule height0.7\ht0\hss}\box0}}}}
\def\nbZ{{\mathchoice {\hbox{$\sf\textstyle Z\kern-0.4em Z$}}
{\hbox{$\sf\textstyle Z\kern-0.4em Z$}}
{\hbox{$\sf\scriptstyle Z\kern-0.3em Z$}}
{\hbox{$\sf\scriptscriptstyle Z\kern-0.2em Z$}}}}
\def\ocirc#1{\ifmmode\setbox0=\hbox{$#1$}\dimen0=\ht0
    \advance\dimen0 by1pt\rlap{\hbox to\wd0{\hss\raise\dimen0
    \hbox{\hskip.2em$\scriptscriptstyle\circ$}\hss}}#1\else
    {\accent"17 #1}\fi}
\def\nbZ{{\mathchoice {\hbox{$\sf\textstyle Z\kern-0.4em Z$}}
{\hbox{$\sf\textstyle Z\kern-0.4em Z$}}
{\hbox{$\sf\scriptstyle Z\kern-0.3em Z$}}
{\hbox{$\sf\scriptscriptstyle Z\kern-0.2em Z$}}}}
\DeclareMathAlphabet{\mathpzc}{OT1}{pzc}{m}{it}
\def\holder {\mathfrak{C}}  
\newcommand{\holdern}[1]{\nor{#1}{\beta, \infty}}  
\def\dhaus {{d_{\mathrm{Haus}}}}
\def\dirhaus {{d_{\overrightarrow{\mathrm{Haus}}}}}
\def\dint {{d^*_n}}  
\def\din  {{d^*}}
\def\bdint {{\mathbf{\dint}}} 
\def\dO {{\mathbf{\din}}}
\def\cint  {{C_{\din}}}  
\def\dintup {{\overline{\dint}}}  
\def\bdintup {{\mathbf{\dintup}}}
\def\fint {{g^*_n}}  
\def\fin  {{g^*}}
\def\sphere {{\mathbb{S}}}  
\def\optsub {{\mathcal{S}}}  
\def\balpha {{\boldsymbol \alpha}}
\def\v  {\bold v}
\def\b  {\bold b}
\def\epsu {{\underline{\eps}}}
\def\U  {\mathbb{U}}
\def\H  {\mathbb{H}}
\def\B  {\mathbb{B}}
\def\L  {\mathcal{L}}
\def\O  {\mathcal{O}}
\def\Q  {\mathcal{Q}}
\def\G  {\mathcal{G}}
\def\Wtilde {\mathrlap{\, \widetilde{\phantom{A}}}W}
\newcommand{\Bon}[2]{\mathcal{B}_{\mathrm{on}}^{#1}(#2)}
\newif\ifrem
\newif\ifsuppcalc
\titleformat{\paragraph}[runin]{\normalsize\itshape\bfseries}{\theparagraph}{1em}{}[\@addpunct{.}]
\titleformat{\subparagraph}[runin]{\normalsize\itshape}{\theparagraph}{1em}{}[\@addpunct{.}]
\begin{document}


\title{\LARGE \textbf{\textsc{Contraction rates and projection subspace estimation with Gaussian process priors in high dimension}}}

\author[1]{\textbf{Elie \textsc{Odin}} \thanks{\texttt{elie.odin@math.univ-toulouse.fr}}}
\author[1, 2, 4]{\textbf{François \textsc{Bachoc}} \thanks{\texttt{francois.bachoc@math.univ-toulouse.fr}}}
\author[3, 4]{\textbf{Agnès \textsc{Lagnoux}} \thanks{\texttt{lagnoux@univ-tlse2.fr}}}

\affil[1]{Institut de Mathématiques de Toulouse; UMR5219. Université de Toulouse; CNRS. UT3, F-31062 Toulouse, France. \vspace{0.1cm}}
\affil[2]{Institut Universitaire de France (IUF), France.  \vspace{0.1cm}}
\affil[3]{Institut de Mathématiques de Toulouse; UMR5219. Université de Toulouse; CNRS. UT2J, F-31058 Toulouse, France.\vspace{0.1cm}}
\affil[4]{Artificial and Natural Intelligence Toulouse Institute (ANITI), Toulouse, France.}

\date{07 2025}

\maketitle

\bigskip
\smallskip

\begin{abstract}
This work explores the dimension reduction problem for Bayesian nonparametric regression and density estimation. 
More precisely, we are interested in estimating a functional parameter $f$ over the unit ball in $\mathbb{R}^d$, which depends only on a $\din$-dimensional subspace of $\mathbb{R}^d$, with $\din < d$.
It is well-known that rescaled Gaussian process priors over a given function space achieve smoothness adaptation and posterior contraction with near minimax-optimal rates. Furthermore, hierarchical extensions of this approach, equipped with subspace projection, can also adapt to the intrinsic dimension $\din$ (\cite{Tokdar2011DimensionAdapt}).
When the ambient dimension $d$ does not vary with $n$, the minimax rate remains of the order $n^{-\beta/(2\beta +\din)}$, where $\beta$ denotes the smoothness of $f$.
However, this is up to multiplicative constants that can become prohibitively large as $d$ increases. The dependence between the contraction rate and the ambient dimension has not been fully explored yet and this work provides a first insight: we let the dimensions $\din$ and $d$ grow with $n$, and by combining the arguments of \cite{Tokdar2011DimensionAdapt} and \cite{Castillo2024DHGP}, we derive growth rates for them that still lead to posterior consistency with minimax rate.
We also discuss the optimality of the growth rate for $d$.
Additionally, we provide a set of assumptions under which a consistent estimation of $f$ leads to correct estimation of the subspace projection, assuming that $\din$ is known.
\end{abstract}

\bigskip

\section{Introduction}


We are interested in the nonparametric problem of estimating an unknown 
function $f : \U_d \subset \R^d \to \R$ with a high-dimensional input space, where $\U_d$ is the unit ball. For both regression and density estimation, it is well known that the minimax rates without sparsity assumptions are of order $n^{-\beta/(2\beta + d)}$, where $\beta$ is the smoothness of $f$ and $n$ is the sample size (\cite{Birge1986DensityHelling}, \cite{Stone1982Rates}). 
This results in an extremely slow convergence when $d$ is large, a phenomenon known as the curse of dimensionality. 
To counteract this effect, it is usually assumed that $f$ depends only on a low dimensional structure in order to obtain a rate $n^{-\beta/(2\beta + \din)}$ that depends only on the \textit{intrinsic dimension} $\din$ of this structure (\cite{Lafferty2008Rodeo}, \cite{Yang2015Minimax}, \cite{Jiang2021VariableSelection}).
In this article, we 
focus on the (semi-)parametric case where $f$ depends only on a $\din$-dimensional linear subspace $\mathcal{S}_n \subset \R^d$, with $\din \ll d$ (\cite{Tokdar2010Bayesian}, \cite{Tokdar2011DimensionAdapt}).
As the above rates are given up to a multiplicative constant, which may itself depend on the \textit{ambient dimension} $d$, another problem arises: 
establishing if the number $n$ of available data is sufficient in regard to the problem's dimension. This is generally done by allowing the ambient dimension $d = d_n$ and the intrinsic dimension $\din = \dint$ to grow with $n$, and then by observing which growth rate still permits near minimax estimation at rate of order $n^{-\beta/(2\beta + \dint)}$.

We distinguish two cases, whether the subspace $\mathcal{S}_n$ is parallel to the axes or not.
In the first case (when $\mathcal{S}_n$ is parallel to the axes), the dimension reduction problem is referred to as \textit{variable selection}. In this context, it is known that for nonparametric regression, the sparsity pattern can be consistently recovered when $d_n$ grows exponentially with the sample size (\cite{Comminges2012tight}, \cite{Yang2015Minimax}). More precisely, \cite{Comminges2012tight} show that there exist positive constants $c_1, c_1'$, and $c_* < c^*$ such that
\begin{itemize}
\item if $e^{c_1 \dint} \log (d_n/\dint) \cdot n^{-1} < c_*$, there exists a consistent estimator of the sparsity pattern, 
\item if $e^{c_1' \dint} \log (d_n/\dint) \cdot n^{-1} > c^*$, no such estimator exists.
\end{itemize}
This phase transition phenomenon seems to be similar in the linear regression framework, replacing $e^\dint$ by $\dint$ (see \cite{Verzelen2012UHD} and \cite{Wainwright2009sharp}).
The minimax rate for the estimation of $f$ then takes the composite form $n^{-\beta/(2\beta + \dint)} + \sqrt{\dint \log(d_n/\dint)/n}$, as shown in \cite{Yang2015Minimax}, where the first term is the minimax rate for the intrinsic part of $f$ and the second term is the variable selection uncertainty.

In the second case (when nothing is assumed on $\mathcal{S}_n$), the estimation of a minimal subspace which contains all the information on $f$ is sometimes referred to as \textit{Sufficient dimension reduction} (\cite{Cook1998Regression}). Among the various methods proposed, 
Sliced Inverse Regression (SIR) (\cite{Li1991Sliced}) is one of the most studied. The first article to include the framework of growing ambient dimension $d_n$ shows the consistency of SIR only under $d_n = O(n^{1/2})$ (\cite{Zhu2006Sliced}).
Later, \cite{Lin2018Consistency} show that the phase transition phenomenon occurs for $d_n = o(n)$. In other words, SIR-based estimators are consistent only if $d_n/n \tendto{n} 0$ and this growth rate appears to be optimal (\cite{Lin2021Optimality}).

The difference between growth rates encountered in variable selection and in Sufficient dimension reduction has led recently to the emergence of methods combining both approaches.
If $f$ depends on a $\din$-dimensional subspace $\mathcal{S}_n$ which can be described by linear combinations of only a small number of variables, then we can perform both variable selection and Sufficient dimension reduction over the selected variables. This method, studied for example in \cite{Lin2021Optimality}, \cite{Lin2019Sparse}, \cite{Tan2020SparseSIR}, and \cite{Zeng2024SDR}, allows a return to the exponential growth of $d_n$.

The aim of this article is to perform both function and subspace estimation in the case where no hypothesis is made on $\mathcal{S}_n$ and to derive the maximum growth rate for $d_n$.
Our analysis is done in the nonparametric Bayesian framework introduced by \cite{Ghosal2000Cvrate}. Among the advantages of this approach, the use of very versatile priors, such as Gaussian processes \cite{Vaart2008ContrateGaussian}, allows adaptation to both smoothness and dimension at near minimax rates with a single Bayesian procedure (\cite{Vaart2009AdaptBayes}, \cite{Tokdar2010Bayesian}, \cite{Jiang2021VariableSelection}). This also avoids the complications associated with kernel methods; see for example the introduction of \cite{Shen2013Adaptive}.

In this context, the work of Tokdar, Zhu, and Ghosh \cite{Tokdar2010Bayesian} is one of the first to include a hierarchical prior with a parameter on the subspace. 
They use a uniform prior on the Grassmannian of dimension $\dint = \din$ and a logistic Gaussian process prior for the conditional density function. The authors are able to derive posterior consistency for both the conditional density function and the subspace. 
After which near minimax contraction rates are obtained in \cite{Tokdar2011DimensionAdapt} by extending the framework introduced in \cite{Vaart2009AdaptBayes}.
More recently, \cite{Jiang2021VariableSelection} show that for variable selection, the estimation of both the regression function and the sparsity pattern can be achieved simultaneously at near minimax rates, even when the ambient dimension $d_n$ grows exponentially with the sample size.
This result is consistent with the rate provided in \cite{Yang2015Minimax}, where the variable selection uncertainty term is logarithmic in $d_n$. 

The paper is organized as follows. 
In Section \ref{sec:Pb_f}, we introduce a hierarchical Gaussian process-based prior for both regression and density estimation. This prior is comprised of a dimension parameter for $\dint$, a uniform prior over $\dint$-dimensional linear subspaces of $\R^{d_n}$, a $\dint$-dimensional Gaussian process, and a rescaling parameter to ensure adaptation to smoothness.
Our first result (Theorem \ref{thm:postcons} in Section \ref{sec:Main_thm}) shows that for the estimation of $f$, near minimax contraction rates can be achieved for dimensions $d_n$ growing not faster than $n^{\dint/(2\beta + \dint)}$. 
Apart from specific effects related to the shape of the compact support of $f$ and using the idea that the minimax rate can be decomposed in two terms, one for the estimation of $f$ in intrinsic dimension and one for the model selection uncertainty (\cite{Yang2015Minimax}), we argue that we cannot expect a faster rate in view of the polynomial growth of $d_n$ in Sufficient dimension reduction.
Notably, this rate corresponds to the limiting case where the minimax-error on the estimation of the central subspace $\optsub_n$ is of the same order as the error on $f$.
In Section \ref{sec:sub_recov}, Theorem \ref{thm:subspace_recovery}, we show that for fixed ambient dimension $d_n = d$ and intrinsic dimension $\dint = \din$, the hierarchical Bayes procedure contracts to a subspace that contains $\mathcal{S}_n = \mathcal{S}$ and we conjecture that this subspace is exactly $\mathcal{S}$. 
Our estimation result for $f$ combines the standard arguments used in \cite{Tokdar2011DimensionAdapt} and \cite{Jiang2021VariableSelection} together with new results from \cite{Castillo2024DHGP} for the growing intrinsic dimension framework.
As for the contraction around the central subspace $\mathcal{S}$, we show that an error on the estimation of $\mathcal{S}$ leads to an error on $f$ which contradicts the previously established contraction to $f$.
The proofs of the main results (Theorems \ref{thm:postcons} and \ref{thm:subspace_recovery}) are postponed to Appendices \ref{sec:Proof_MainTh} and \ref{sec:Proof_thmrecov} while Appendix \ref{sec:Lem} is dedicated to useful lemmas.

There exists a vast literature on nonparametric estimation with adaptation to structure in frameworks more general than ours, but it generally focuses on either function estimation or structure estimation but rarely both (at the exception of \cite{Jiang2021VariableSelection}).
Structure estimation includes the aforementioned principal component analysis and Sufficient dimension reduction, whose nonlinear counterpart is manifold learning (\cite{Li2022Spherelets}, \cite{Dunson2024InferManifold}). Without further assumptions, the general problem of inferring a manifold using only the posterior contraction to $f$ as well as the regularization properties offered by the prior appears to be very challenging and to the best of our knowledge, no such work is available yet.
On the other hand, Bayesian procedures for function estimation over a Riemannian manifold (\cite{Yang2016Bayesian}, \cite{Berenfeld2024DensityOnManifold}, \cite{Rosa2023MGPonManifold}), or more general subsets (\cite{Dunson2022GraphGP}, \cite{Tang2024AdaptiveBayes}), even unknown, have proven to be successful and fully adaptive to the intrinsic dimensionality but only when the ambient dimension remains fixed.
Other flexible models such as deep neural networks have been extensively studied and offer similar adaptation properties in Bayesian or non-Bayesian contexts (\cite{Nakada2020AdaptApprox}, \cite{Schmidt2020Nonparametric}, \cite{Castillo2024PostInfDNN}).
The Gaussian counterpart of deep neural networks, deep Gaussian processes (\cite{Finocchio2023DGP}, \cite{Bachoc2025DGP}, \cite{Castillo2024DHGP}) is also known to adapt to the true dimensionality, even in the high-dimensional case with growing $d_n$ and $\dint$ (\cite{Castillo2024DHGP}). However, we show in Section \ref{sec:PostCons} that our results cannot be deduced from the existing literature.

\section{Problem formulation}\label{sec:Pb_f}

\subsection{Notation and definitions}\label{sec:Notation}
	
	We begin with the definition of standard function spaces.
	Let $U$ be a bounded convex subset of $\R^d$, with $d \in \N^*$.
	For $\beta > 0$, write $\beta = k + r$ with $k$ a nonnegative integer and $r \in (0, 1]$.
	The \textit{Hölder space} $\holder^\beta (U)$ is the space of all functions $f: U \to \R$ that are $k$-times differentiable and whose partial derivatives of order $(k_1, \ldots, k_d)$, with $k_1, \ldots, k_d$ nonnegative integers such that $k_1 + \cdots + k_d = k$, are Lipshitz functions of order $r$. That is, there exists a constant $K$ such that
	\[ \abs{\frac{\partial ^k }{\partial_1^{k_1} \cdots \partial_d^{k_d}} (f(x) - f(y))} \ \leq \ K \nor{x - y}{}^r, \]
	for all pairs $x, y \in U^2$ and where $\nor{\cdot}{}$ is the Euclidean norm.
	We endow $\holder^\beta (U)$ with the Hölder sup-norm
	\begin{equation}
		\holdern{f} \ = \ \max \Biggpat{ \max_{|\balpha| \leq k} \nor{\partial^\balpha f}{\infty}, \ \max _{\balpha : |\balpha| = k} \sup_{\substack{x, y \in U^2 \\ x \neq y}} \frac{|\partial^\balpha f(x) - \partial^\balpha f(y)|}{|x - y|_\infty^r}},
	\end{equation}
	where we use the multi-index notation $\partial^\balpha = \partial^{|\balpha|}/\partial_1^{\alpha_1} \cdots \partial_d^{\alpha_d}$ for $\balpha = (\alpha_1, \ldots, \alpha_d) \in \N^d$ and $|\balpha| = |\balpha|_1$. The Hölder ball of radius $K$, denoted $\holder^\beta(U ; K)$, is then the space of all functions $f \in \holder^\beta (U)$ such that $\holdern{f} \leq K$.
	

	We use the following asymptotic notation: if $f$ and $g$ are two real functions over an arbitrary set $S$, then we write $f \lesssim g$ if there exists a universal constant $c$ such that $|f(s)| \leq c \cdot |g(s)|$ for all $s \in S$. The notation $\gtrsim$ is defined in the same way and we write $f \asymp g$ when both $f \lesssim g$ and $f \gtrsim g$ hold.
	
	\medskip
	
	To model the central subspace $\mathcal{S}_n$, we will use the inverse images of isometries instead of the Grassmannian.
	For $d \in \N^*$, we denote by $\O_d$ the space of linear isometries over $\R^d$.
	In addition, the introduction of canonical subspaces and ``component filters'' notation will be very convenient when dealing with sparsity.
	More precisely, for $x \in \R^d$ and $\v \in \{0, 1\}^d$, we denote by $|\v|$ the number of ones in $\v$, by $x_{\v} := (x_j : v_j = 1, \ 1 \leq j \leq d) \in \R^{|\v|}$ the sub-vector with components selected according to $\v$, and for $y \in \R^{|\v|}$, by $y^{\v} := (\tilde{y}_j)_{1 \leq j \leq d}$ the vector in $\R^d$ with $\tilde{y}_j = 0$ if $v_j = 0$ and $\tilde{y}_j = y_i$ if $v_j$ is the $i$-th one in $\v$.
	
	Moreover, for any integer $b \in \eint{1, d}$, we denote by $\b$ the vector $\sum _{i = 1}^b e_i$, where $\{e_i : 1 \leq i \leq d\}$ is the canonical basis of $\R^d$. The dimension $d$ of the ambient space is implicit in this notation. 
	
	Finally, for $\v \in \{0, 1\}^d$, we denote by $E_\v$ the linear span of $\{e_i : v_i = 1 \}$ and by $E_{1 - \v}$ the linear span of $\{e_i : v_i = 0 \}$. Clearly, $E_{1-\v}$ is the orthogonal complement of $E_\v$.

	\medskip

	The proof of Theorem \ref{thm:postcons} involves measuring the complexity of the space where the prior puts its mass. This measure is carried out via \textit{metric entropy}.
	Given a subset $B$ of a metric space $(E, d)$ and a radius $\varepsilon > 0$, we can define the following numbers: 
	\begin{itemize}
		\item the $\varepsilon$\textit{-packing number} $D(\varepsilon, B, d)$ is the maximum number of points in $B$ such that the distance between every pair is at least $\varepsilon$,
		\item the $\varepsilon$\textit{-covering number} $N(\varepsilon, B, d)$ is the minimum number of open balls of radius $\varepsilon$ needed to cover $B$.
	\end{itemize}
	The logarithms of the packing and the covering numbers are called the \textit{entropy} and the \textit{metric entropy} respectively.

\subsection{Bayesian framework for density estimation and regression}

Our main result will be stated for two statistical settings: \textit{density estimation} and \textit{fixed} or \textit{random design regression with Gaussian error}. As we will work with subspaces that are not orthogonal with the canonical axes, the usual support $[0, 1]^d$ for the density or the regression function will be replaced by the unit ball $\U_d := \{x \in \R^d, \nor{x}{} \leq 1 \}$. 
For a given number of observations $n$, the density or the regression function will be characterized by a functional parameter $f_n^* : \U_{d_n} \to \R$. 
The ambient dimension $d_n$ is allowed to grow with $n$ but $f_n^*$ is supposed to depend only on a subspace $\mathcal{S}_n$ with slowly-varying dimension $\dint < d_n$.
A prior on $\dint$ and on the subspace itself will be introduced later to ensure the dimension adaptability.
The prior on the true parameter $f_n^*$ will consist of a projected Gaussian random variable $W_n$ with values in the Banach space $(\mathcal{C}(\U_{d_n}), \nor{\cdot}{\infty})$.
Now let us describe the two statistical settings introduced above.

\paragraph{Density estimation}

Suppose we observe an i.i.d.\ sample $X_1, \ldots, X_n$ from a law $P_n^*$ over $\U_{d_n}$, which admits a continuous density $p_n^*$ relative to the Lebesgue measure on $\R^{d_n}$.
The prior $W_n$ puts its mass on a space that is far too large compared to the space of continuous densities. So to correctly retrieve $p_n^*$, we will work with the parametrized density $p_{n, W_n}$ where, for $w \in \mathcal{C}(\U_{d_n})$, 
\begin{equation}\label{eq:density}
	p_{n, w}(x) \ := \ \frac{e^{w(x)}}{\int_{\U_{d_n}} e^{w(x)} dx}.
\end{equation}
Here the exponential forces the prior to charge only nonnegative functions while the renormalization ensures that $p_{n, w}$ integrates to one. 
The true density $p_n^*$ will then be encoded by the parameter $f_n^* \in \mathcal{C}(\U_{d_n})$ such that $p_n^* = p_{n, f_n^*}$. 
In this way, all the assumptions on the true parameter $f_n^*$ can be transferred to the density $p_n^*$.
That is, $p_n^*$ is supposed to depend only on the $\dint$-dimensional subspace $\mathcal{S}_n$ of $\R^{d_n}$.

The natural metric between two densities $p$ and $p'$ is the Hellinger distance defined by $h(p, p') = \nor{\sqrt{p} - \sqrt{p'}}{2}$, where $\nor{\cdot}{2}$ is the $L^2$-norm with respect to the Lebesgue measure. Consequently, if the parameter space is embedded with a prior $\Pi_n$, we will say that the posterior \textit{contracts} to $p_n^*$ at rate $\suite{\eps}$ if, for any sufficiently large constant $M$, 
\begin{equation}\label{eq:PC_dens}
	\Prob_n^* \cro{\Pi_n \pat{f \in \mathcal{C}(\U_{d_n}) : h(p_{n, f}, p^*_n) > M \eps _n \ | \ X_1, \ldots, X_n}} \tendto{n} 0 , 
\end{equation}
where $\Prob_n^*$ is the joint law of $(X_1, \ldots, X_n)$.

\paragraph{Regression with Gaussian error}

In a regression problem, the covariates can be either predetermined for each observation, this is the \textit{fixed design} case, or can be part of the observation themselves. In the later case, the covariates can be considered as random; this corresponds to the \textit{random design} case. The notion of posterior contraction differs slightly between these two situations and some clarifications are in order.

\subparagraph{Fixed design}

In this setting, we consider a sample of $n$ real observations $Y_1, \ldots, Y_n$ satisfying the model $Y_i = f_n^*(x_i) + \epsilon_i$, with $\epsilon_i \sim \mathcal{N}(0, \sigma^2)$ where the $x_i$'s for $i \in \eint{1, n}$ are $n$ fixed covariates in $\U_{d_n}$ and where the $\epsilon_i$'s are $n$ i.i.d.\ univariate Gaussian random variables with zero mean and standard deviation $\sigma$.
As previously, the regression function $f_n^* : \{x_i : i \in \eint{1, n}\} \to \R$ is supposed to depend only on a $\dint$-dimensional subspace of $\R^{d_n}$.

We will use $W_n$ directly as a prior for the regression function because $W_n$ can be viewed by restriction as a Gaussian process over the space $\mathcal{X}_n := \{x_i : i \in \eint{1, n}\}$ of design points.
To quantify the posterior contraction, we introduce the design dependent semi-metric $\nor{\cdot}{n}$ defined as the $L^2(\Prob_n^x)$-norm for the empirical measure $\Prob_n^x = n^{-1} \sum_{i = 1}^n \delta_{x_i}$ of the design points.
If the space of regression functions over $\mathcal{X}_n$ is embedded with a prior $\Pi_n$, we will say that the posterior \textit{contracts} to $f_n^*$ at rate $\suite{\eps}$ if, for any sufficiently large constant $M$, 
\begin{equation}\label{eq:PC_Fdesign}
	\Prob_n^* \cro{\Pi_n \pat{f \in \mathcal{C}(\mathcal{X}_n) : \nor{f - f_n^*}{n} > M \eps _n \ | \ Y_1, \ldots, Y_n}} \tendto{n} 0 , 
\end{equation}
where $\Prob_n^*$ is the joint law of $(Y_1, \ldots, Y_n)$.

\subparagraph{Random design}

Here, we observe $n$ i.i.d.\ pairs $(X_1, Y_1), \ldots, (X_n, Y_n)$ such that $Y_i = f_n^*(X_i) + \epsilon_i$, with i.i.d.\ $\epsilon_i \sim \mathcal{N}(0, \sigma^2), \ \sigma \in [1, 2]$, and where the $X_i$'s are random covariates over $\U_{d_n}$ independent of the $\epsilon_i$'s and admitting a common density $G_n$ that is bounded away from zero. 
For the sake of simplicity, the standard deviation $\sigma$ is restricted to the interval $[1, 2]$ but these bounds can be relaxed, see Remark \ref{sec:RDregress} for details.
Again, the regression function $f_n^* : \U_{d_n} \to \R$ is supposed to depend only on a $\dint$-dimensional subspace of $\R^{d_n}$.
Moreover, we use $W_n$ directly as a prior for the regression function.
The natural metric for this problem is the $L^2(G_n)$-norm denoted by $\nor{\cdot}{2, G_n}$ where $G_n$ is identified with the law of one covariate. 
This metric is not equivalent to the Hellinger metric, which is used in the proof of Theorem \ref{thm:postcons}, unless all regression functions are uniformly bounded by a constant $Q > 0$. This condition can be fulfilled by projecting the prior on the space of all functions uniformly bounded by $Q$, as proposed in \cite{Gine2011Rates}. However, this would force us to rewrite the proof of Theorem \ref{thm:postcons} only for this setting. Instead, we directly post-process the posterior to integrate this constraint as in \cite{Yang2016Bayesian}.
Then, the formulation of posterior consistency becomes as follows.
Considering a prior $\Pi_n$ over the regression functions, we will say that the posterior \textit{contracts} to $f_n^*$ at rate $\suite{\eps}$ if, for $Q > 0$ and any sufficiently large constant $M$, 
\begin{equation}\label{eq:PC_Rdesign}
	\Prob_n^* \cro{\Pi_n \pat{f \in \mathcal{C}(\U_{d_n}) : \bignor{f^Q - {f_n^*}^Q}{2, G_n} > M \eps _n \ | \ (X_1, Y_1), \ldots, (X_n, Y_n)}} \tendto{n} 0 , 
\end{equation}
where $\Prob_n^*$ is the joint law of $(X_1, Y_1), \ldots, (X_n, Y_n)$ and $f^Q := (f \vee -Q) \wedge Q$ is the truncated version of $f$.

\bigskip

\section{Main result for the functional parameter}\label{sec:Main_thm}

Our sparsity framework takes the form of a hierarchical model with three components: an intrinsic dimension $\dint$ less than or equal to the ambient dimension $d_n$, an intrinsic functional parameter $\fint$ over a $\dint$-dimensional space, and a rotation $q_n^*$ which maps a standard subspace of $\R^{d_n}$ to $\optsub_n$. The true parameter $f_n^*$ is a sparse continuation of $\fint$ whose support has been rotated to match $\optsub_n$.

\begin{assumption}[Sparsity of the true parameter]\label{ass:spars}
For all $n \in \N$, there exist an intrinsic dimension $\dint \in \N$, an intrinsic parameter $\fint \in \mathcal{C}(\U_\dint)$, and a linear isometry $q_n^* \in \O_{d_n}$ such that $d_n \geq \dint$, and $f_n^*(x) = \fint\pat{(q_n^*(x))_{\bdint}}$, for all $x \in \U_{d_n}$.
\end{assumption}

The intrinsic functional parameter $\fint$ is sometimes called the \textit{core function}. The use of isometries instead of vector subspaces permits us to avoid the manipulation of the Grassmannian, replaced by the more convenient orthogonal group $\O_{d_n}$. This will considerably simplify the proof of Theorem \ref{thm:postcons}.
Note that the central subspace $\optsub_n$ is now equal to $(q_n^*)^{-1}(E_{\bdint})$.
The next property is straightforward.

\begin{proper}\label{prop:Parci}
For $n \in \N$, $f_n^*$ is constant on the intersection between $\U_{d_n}$ and the affine subspaces $\optsub_n^\perp + x$, for $x \in \R^{d_n}$, where $\optsub_n^\perp$ is the orthogonal complement of $\optsub_n$.
\end{proper}


In addition to dimension adaptation, the present setting allows the core function $\fint$ to be arbitrarily smooth (in an Hölder sense) while maintaining near-minimax contraction rates. As in \cite{Castillo2024DHGP}, a control on the Hölder-norm, which will be clarified later, is needed.

\begin{assumption}[Smoothness of $\fint$]\label{ass:smooth}
There exist $\beta > 0$ and $K_n \geq 1$ such that $\fint \in \holder^\beta(\U_{\dint} ; K_n)$ with $K_n$ growing at most polynomially with $n$.
\end{assumption}

\subsection{Prior specification}

Similarly to \cite{Tokdar2011DimensionAdapt}, we embed the parameter space with a hierarchical prior that consists of a random sparsity pattern $(b, q)$, providing the intrinsic dimension and the orientation, a rescaling parameter $a$, and a core function modeled by a standard squared exponential Gaussian process. The prior on the sparsity pattern allows adaptation to intrinsic dimension, whereas a carefully chosen rescaling parameter together with the above infinitely smooth process has been shown to perform smoothness adaptation (\cite{Vaart2009AdaptBayes}).

For $n > 0$, let $W = (W(x) : x \in \R^{d_n})$ be a standard squared exponential Gaussian process on $\R^{d_n}$; that is, a centered Gaussian process with covariance kernel
\[ \Esp [W(s)W(t)] \ = \ \exp \bigpat{-\nor{s - t}{}^2}, \qquad \text{for all } s, t \in \R^{d_n}, \]
where $\nor{\cdot}{}$ is the Euclidean norm.

Let $a > 0$, $b \in \eint{1, d_n}$, and $q \in \O_{d_n}$. We define $W_x^{a, b, q} := W(a \diag(\b) \cdot q(x))$ and $W^{a, b, q} := (W_x^{a, b, q} : x \in \R^{d_n})$ a rescaled Gaussian process with sparsity pattern $(b, q)$, where $\diag(\b)$ is the diagonal matrix with diagonal vector $\b$.
Then, the process $W^{a, b, q}$ is constant on affine subspaces $q^{-1} (E_{1-\b}) + x$, for $x \in \R^{d_n}$ (as in Property \ref{prop:Parci}) and if $R := q^{-1}\diag(\b)q$ is the orthogonal projection onto $q^{-1}(E_\b)$, then $W^{a, b, q}_x = W^{a, b, q}_{Rx}$, for all $x \in \R^{d_n}$.

To work properly with $W^{a, b, q}$, we have to verify that its law identifies with the law of a $b$-dimensional standard squared exponential Gaussian process. To do so, define
\begin{align*}
\Upsilon : \ \R^b & \ \to \ q^{-1}(E_\b) \\
x & \ \mapsto \ \frac1a q^{-1} (x^\b),
\end{align*}
a bijection with inverse $\Upsilon^{-1} (t) = a (qt)_\b$ for $t \in q^{-1}(E_\b)$. Then, $W^{a, b, q}_{\Upsilon(x)} = W(x^\b)$ for all $x \in \R^b$. 
Let us introduce $\Wtilde := (W^{a, b, q}_{\Upsilon(x)}, x \in \R^b)$.
For all $(x, y) \in \R^b \times \R^b$, we have
\[ \Esp [\Wtilde(x) \Wtilde(y)] \ = \ \Esp [ W(x^\b) W(y^\b)] \ = \ e^{-\nor{x^\b - y^\b}{}^2}. \]
So $\Wtilde$ is a standard squared exponential Gaussian process in dimension $b$ that does not depend on either $a$ nor $q$. Moreover, $W^{a, b, q}_t = \Wtilde(\Upsilon^{-1}(Rt))$.

From now on, $W^{a, b, q}$ will refer to the restriction on $\U_{d_n}$ of this process. Then, the hierarchical prior on the parameter $f \in \mathcal{C}(\U_{d_n})$ with stochastic subspace selection is defined as the joint law $\Pi_n$ of $(\Gamma, \Theta, W^{A, \Gamma, \Theta})$, where $A$ is the scaling parameter, $\Gamma \in \eint{1, d_n}$ is the prior on the subspace dimension, and $\Theta$ is the prior on the orientation. All $A, \Gamma$, and $\Theta$ are supposed to be independent of $W$.
\begin{assumption}[Prior on intrinsic dimension]
	The prior $\Gamma$ is defined by a collection of probability vectors $(\pi_{\Gamma, n}(b) \ : \ 1 \leq b \leq \dintup)$ such that $\pi_{\Gamma, n}(\dint) \geq 1/n$ 
	with $\dintup$ given by Assumption \ref{ass:growth}.
\end{assumption}
The last part of the assumption is not very restrictive in view of Assumption \ref{ass:growth}. In fact, $\pi_{\Gamma, n}$ can be taken as a uniform distribution on $\eint{1, \dintup}$ or a truncated Poisson distribution with the restriction $\dintup = \cint \sqrt{\log n}$.

When establishing the prior mass condition \eqref{eq:priormasscond}, we integrate over a set of isometries whose prior mass must be lower bounded. This lower bound is derived in Lemma \ref{lem:measure1} under the condition that the law of the stochastic isometry $\Theta$ is translation invariant. That is, for all subsets $\mathcal{Q}$ of $\O_{d_n}$ and for all $q \in \O_{d_n}$, we need $\Prob (\Theta \in q \cdot \mathcal{Q}) = \Prob(\Theta \in \mathcal{Q})$. Therefore, the law of $\Theta$ is taken as the unit Haar measure on $\O_{d_n}$, the only probability measure that is translation invariant on $\O_{d_n}$.

As for the scaling parameter $A$, there exists a collection of probability measures $\pi_{n, d}$ over $(0, \infty)$, $0 \leq d \leq d_n$, satisfying $A \ | \ (\Gamma = d) \sim \pi_{n, d}$. 
For convenience, the notation $\pi_{n, d}$ will refer to a probability measure as well as its density.
\begin{assumption}[Rescaling measures]\label{ass:resc}
There exist constants $D_1, D_2, D_3$, and $D_4$ such that for all $n \in \N^*$ and $d < d_n$, the density $\pi_{n, d}$ satisfies
\begin{enumerate}
\item for all sufficiently large $a$, $\pi_{n, d}(a) \geq D_1 e^{-D_2 a^d (\log a)^{d+1}}$ ;
\item for all $a \geq 0$, $\pi_{n, d}(a) \leq D_3 e^{-D_4 a^d (\log a)^{d+1}}$.
\end{enumerate}
\end{assumption}
Assumptions similar to Assumption \ref{ass:resc} are standard, see for instance Equation (3.4) in \cite{Vaart2009AdaptBayes} or Assumption 5 in \cite{Jiang2021VariableSelection}.
For example, this assumption is satisfied if, for all $n \in \N^*$ and $d < d_n$, $A^d (\log A)^{d+1} \ | \ (\Gamma = d)$ is an exponential law with parameter independent of $d$ and $n$  (indeed, if $g(A)$ has density function $f$, with $g$ differentiable and strictly increasing, then $A$ has density function $(f \circ g) \cdot g'$).

The next section gives some precision about the reproducing kernel Hilbert space (RKHS) of $W^{a, b, q}$. The content is a bit technical and can be skipped at first reading.

\subsection{Reproducing kernel Hilbert space of $W^{a, b, q}$}

One of the advantages of choosing a Gaussian process prior is that the contraction rate depends explicitly on the small ball probability and on the relative position of the parameter with respect to the RKHS associated with the process.
This section is dedicated to the basic properties of this space.
For elementary definitions and further precision about the link between the contraction rate and the RKHS, we refer the reader to \cite{Vaart2008ContrateGaussian} and \cite{Vaart2008RKHS}.

\begin{notation}
	We denote by $\mathcal{C}(\U_d \ | \ q^{-1}(E_\b))$ the space of continuous functions on $\U_d$ which are constant on affine subspaces $q^{-1} (E_{1-\b}) + x$, for $x \in \U_d$.
\end{notation}

We introduce the operator
\[ \Lambda_{b,q} : 
\begin{cases} \mathcal{C}(\U_b) &\kern-6pt \to \ \mathcal{C}(\U_d \ | \ q^{-1}(E_\b)) \\[0.3cm]
	\phantom{\L^2} f &\kern-6pt \mapsto \ \Lambda_{b,q} \, f : 
	\begin{cases} 
		\U_d &\kern-6pt \to \ \R \\
		\ x &\kern-6pt \mapsto \ f((qx)_\b),
	\end{cases}
\end{cases}
\]
or simply $\Lambda_q$ or $\Lambda$ when $b$ and $q$ are implicit, so that $W^{a, b, q} = \Lambda_{b,q} (\Wtilde^a)$, where $\Wtilde^a = (\Wtilde_{at}, t \in \U_b)$ is the process $\Wtilde$ introduced above rescaled by $a$ and restricted to $\U_b$.
It is a bijective linear map and also an isometry if the domain and the codomain are endowed with the uniform norm. In particular, the map $\Lambda$ is continuous. According to Lemma 7.1 in \cite{Vaart2008RKHS}, if $\widetilde{\H}_a$ is the RKHS of $\Wtilde^a$, then the RKHS $\H_{a, b, q}$ of $W^{a, b, q}$ is equal to $\Lambda (\widetilde{\H}_a)$.
Let us detail its elements.
The stochastic process RKHS of $\Wtilde^a$ (as defined in \cite{Vaart2008RKHS}) is composed of functions $h : \U_b \to \R$ for which there exists $\psi \in L^2_\C (\mu^{se}_{a, b})$ such that
\begin{equation*}
	h(t) \ = \ \re \int _{\R^b} e^{-i\lambda \cdot t} \psi (\lambda) d\mu^{se}_{a, b} (\lambda) , \quad t \in \U_b,
\end{equation*}
where $\mu^{se}_{a, b}$ is the spectral measure of the $a$-rescaled squared exponential Gaussian process in dimension $b$ with spectral density $f^{se}_{a, b} : t \mapsto (2a\sqrt{\pi})^{-b} \exp(-\frac14 \| t/a \|^2)$ (see Lemma 4.1 in \cite{Vaart2009AdaptBayes}, and the following discussion). We can view $\Wtilde^a$ as a random Gaussian element with values in the Banach space $(\mathcal{C}(\U_b), \nor{\cdot}{\infty})$. Thus, according to Theorem 2.1 in \cite{Vaart2008RKHS}, the stochastic process RKHS and the Banach space RKHS coincide and we can apply Lemma 7.1 from the same reference.
The space $\H_{a, b, q} = \Lambda (\widetilde{\H}_a)$ is then the set of functions
\begin{equation}\label{eq:repr_thm}
	\overline{h} : x \in \U_d \ \mapsto \ \re \int _{\R^b} e^{-i \inner{\lambda}{(qx)_\b}} \psi (\lambda) d \mu^{se}_{a, b} (\lambda), 
\end{equation}
where $\psi$ runs through $L^2_\C(\mu^{se}_{a, b})$ and the RKHS norm is $\nor{\overline{h}}{\H_{a, b, q}} = \nor{\psi}{L^2(\mu^{se}_{a, b})}$.

We remark that functions of the RKHS of $W^{a, b, q}$ have the same sparsity-pattern as the trajectories of $W^{a, b, q}$.
\begin{remark}
	Functions $\overline{h} \in \H_{a, b, q}$ are constant on affine subspaces $q^{-1}(E_{1-\b}) + x$ for $x \in \U_d$.
\end{remark}

As mentioned at the beginning of this section, contraction rates under Gaussian process prior depend on two ingredients: the small ball probability and the relative position of the parameter with respect to the RKHS. 
For a parameter $f \in \mathcal{C}(\U_d \ | \ q^{-1}(E_\b))$ and $\eps > 0$, these two quantities define the \textit{concentration function} $\phi_f^{a, b, q}$, with
\begin{equation}\label{rem:concfunct}
	\phi_f^{a, b, q} (\eps) \ := \ \inf _{h \in \H_{a, b, q} : \nor{h - f}{\infty} < \eps} \nor{h}{\H_{a, b, q}}^2 - \log \Prob \pat{\nor{W^{a, b, q}}{\infty} < \eps}.
\end{equation}

\subsection{Posterior consistency}\label{sec:PostCons}

Define the contraction rates, 
\begin{equation}
	\epsu_n \ := \ n^{-\tfrac{\beta}{2\beta + \dint}} \qquad \text{and} \quad 
	\eps_n \ := \ C_\eps(\beta)^\dint K_n^2 \cdot \epsu_n \cdot (\log n)^{\kappa},
\end{equation}
with $\kappa = \kappa_n := \beta \pat{\dint +1}/(\dint + 2\beta)$, 
where $C_\eps (\beta) = C_\eps$ is a large constant that depends only on $\beta$ and where $K_n$ is an upper bound on the Hölder-norm of $\fint$ (Assumption \ref{ass:smooth}).
By taking $K_n^2 \leq C_\eps^\dint$, the rate $\eps_n$ is equal to the minimax rate times a lower order term of at most $C_\eps^{2\dint} (\log n)^\kappa$. As noted in \cite{Castillo2024DHGP}, a growth of the order $C^\dint$ for the radius of the Hölder ball in dimension $\dint$ is typical, see the discussion below Corollary 3 and Appendix D in the same reference. 

Before we state the theorem, we need a last assumption which determines how the ambient dimension $d_n$ and the intrinsic dimension $\dint$ are allowed to grow with the sample size $n$.
In the spirit of \cite{Yang2015Minimax}, this growth rate must take into account the uncertainty of the subspace estimation. In particular, if we want to keep a near minimax contraction rate, this uncertainty must be controlled by $\epsu_n$.
In Sufficient dimension reduction, minimax rates of SIR estimators are known to be of the order $\sqrt{\dint d_n /n}$ for the projection loss $\nor{P_{\optsub_n} - P_{\optsub_n'}}{F}$ (see \cite{Tan2020SparseSIR}, \cite{Lin2021Optimality} and \cite{Zeng2024SDR}) and similar results are known in principal component analysis (\cite{Cai2013SparsePCA}, \cite{Johnstone2004sparsePCA}).
Ignoring the effects caused by the ellipsoidal support, an error in the estimation of $\optsub_n$ with the projection loss results in an error of the same order in the estimation of the main parameter $f^*_n$ or $p^*_n$, provided that $f^*_n$ or $p^*_n$ fluctuates sufficiently and up to multiplicative constants depending on $\dint$.
This behavior can be justified by early use of elements from the next section (Section \ref{sec:sub_recov}).

For the sake of brevity, we only focus on density estimation, as the arguments also apply for the regression problem. Since we are interested in a minimax bound, we can restrict ourselves to cases where $p^*_n$ is strictly monotone.
In the proof of Theorem \ref{thm:subspace_recovery}, we show that if $\nor{P_{\optsub_n} - P_{\optsub_n'}}{F} > \vartheta$ for some small $\vartheta > 0$ (as in the equation \eqref{eq:lemFrobnorm} from Lemma \ref{lem:dist}), this results in an Hellinger error on $p^*_n$ of the order ${(L/\sqrt{d_n})^{(d_n +2)/2} \cdot \vartheta/\sqrt{\dint}}$, for a window size $L < 1$, see for example the inequality \eqref{eq:Hellingerrd}. Because $p^*_n$ is supposed strictly monotone, the default of constancy is spread out over the support and we can integrate the error not only over a small hypercube $R$ but over a region whose mass does not decrease with $d_n$, thus eliminating the factor $(L/\sqrt{d_n})^{(d_n +2)/2}$ in the preceding error.

Consequently, the maximum growth rate one can expect is constrained by
\begin{equation*}
	\sqrt{\frac{d_n}{n}} \ \leq \ n^{-\tfrac{\beta}{2\beta + \dint}},
\end{equation*}
which leads to $d_n \leq n^{\tfrac{\dint}{2\beta + \dint}}$.
The growth rate of the intrinsic dimension $\dint$ is not affected by the ambient dimension so it can be set similarly to \cite{Castillo2024DHGP}.

\begin{assumption}[Growth of $d_n$ and $\dint$]\label{ass:growth}
	For a given constant $\cint$, the intrinsic dimension $\dint$ and the ambient dimension $d_n$ satisfy,
	\begin{gather}
		\dint \ \leq \ \dintup \ := \ \cint \sqrt{\log n}, \\
		\label{eq:growthd_n} d_n \ \lesssim \ K_n^{4} \cdot n^{\tfrac{\dint}{2\beta + \dint}} \cdot (\log n)^{2\kappa - 2}.
	\end{gather}
\end{assumption}

%
%

\begin{theorem}\label{thm:postcons}
If the parameter space is embedded with the prior $\Pi_n$ and under Assumptions \ref{ass:spars}-\ref{ass:growth}, the posterior contracts at rate $\suit{\eps_n}$ for density estimation (as defined in \eqref{eq:PC_dens}) as well as for regression with fixed or random design (as defined in \eqref{eq:PC_Fdesign} and \eqref{eq:PC_Rdesign}).
\end{theorem}

An examination of $\eps_n$ shows that the contraction rate is improved as the smoothness $\beta$ of $\fint$ grows, unlike $d_n$. This highlights a trade-off between the contraction rate and the growth of the design dimension: fast contractions imply slowly increasing dimension and conversely. 
Moreover, we choose to define $\eps_n$ first and then to constrain $d_n$ according to it, but the converse is straightforward. Given a growth rate larger than \eqref{eq:growthd_n} up to the order $n$, it suffices to derive the corresponding value of $\beta$ and let the posterior contract at the rate corresponding to the underestimated regularity.

In view of Assumption \ref{ass:spars}, it is appealing to model the true parameter with a deep Gaussian process of only two layers: the first modeling the action of the isometry and the second, the core function $\fint$. In fact, minimax contraction rates for deep Gaussian process priors (up to taking tempered posteriors), have recently been derived in the high dimensional setting, where the input dimensionality of the first layer can grow polynomially with $n$ (\cite{Castillo2024DHGP}). 
Unfortunately, in their present form, these results do not lead directly to adaptation to the intrinsic dimension because the first layer (modeling the isometry), from $\U_{d_n}$ to $\U_\dint$, can have dependencies with all input variables.
Then, Theorem 4 in \cite{Castillo2024DHGP} yields contraction rates not faster than $n^{-\beta/(2\beta + d_n)}$.
Still, another point from this reference may produce interesting results. 
It is shown that rescaling priors that allow vanishing lengthscales---such as exponential or Horseshoe priors---can achieve variable selection by freezing the paths in corresponding directions.
In the introduction, we show that variable selection, by dramatically reducing the complexity of the model, can be performed with ambient dimension growing exponentially with $n$, in contrast with the polynomial growth associated with subspace selection. We suggest that using Horseshoes priors with an ambient dimension growing exponentially with $n$, but with only a polynomial number of variables allocated for the subspace, could still lead to posterior contraction at minimax rates without adding another layer to the hierarchical model.

The proof of Theorem \ref{thm:postcons}, postponed in Appendix \ref{sec:Proof_MainTh}, combines the arguments of \cite{Tokdar2011DimensionAdapt} and \cite{Jiang2021VariableSelection}, and uses new results from \cite{Castillo2024DHGP} for the growing intrinsic dimension framework.
The novelty consists in integrating the orientation prior on larger subsets than \cite{Tokdar2011DimensionAdapt} in the prior mass condition to ensure that the growth rate $d_n$ can be attained. We also express the sieve as a finite union indexed by a net in $\O_{d_n}$, where the trivial net used in \cite{Tokdar2011DimensionAdapt} is replaced by a smaller one, with controlled cardinality.

\medskip

\section{Subspace recovery}\label{sec:sub_recov}

In this section, we propose to recover partially the central subspace for density estimation and regression with random design by using the previously established posterior contraction. We will be able to eliminate wrong directions for the orthogonal complement of the central subspace $\optsub_n^\perp$ and consequently to recover a subspace that contains $\optsub_n$. However, posterior contraction does not help to distinguish $\optsub_n$ from bigger subspaces. This problem, which we do not address here, is left to regularization offered by the prior.

To avoid identifiability issues caused by the ellipsoidal support, we suppose that the ambient and intrinsic dimensions $d_n$ and $\dint$ do not depend on $n$. Hence, we drop down the index $n$ and write $d_n = d$, $\dint = \din$, with $d \geq \din$. Without loss of generality, we also suppose that the central subspace $\optsub_n = \optsub := (q^*)^{-1} (E_\dO)$ and the true parameters $\fin$ and $f^*$ (Assumption \ref{ass:spars}) remain fixed.
Theses assumptions are justified by the following considerations. 
For density estimation, if the ambient dimension grows with $n$, the Hellinger metric relative to the Lebesgue measure on $\U_{d_n}$ tends to give more importance to the center of the support, as $n$ tends to infinity. For example, consider a parameter $\fin : \U_2 \to \R$ in dimension two that is everywhere constant except in a small region on the border of $\U_2$, and such that the central subspace $\optsub_n$ is of dimension two. The importance of this small region in the support $\U_{d_n}$, in the Hellinger sense, decreases exponentially with $n$, much faster than the estimation of the true parameter $f_n^*$ in Theorem \ref{thm:postcons}. 
Consequently, for sufficiently large $n$, a constant function $\fin : [0, 1] \to \R$ together with some one-dimensional subspace $\mathcal{S}'$ characterize a density that is in the Hellinger ball of radius $\eps_n$ centered on $f_n^*$; we have no hope of recovering the true subspace by simply using the posterior consistency. 
A similar heuristic can be derived for regression with random design when the joint density of the covariates is uniform or Gaussian.

When measuring discrepancies between subspaces, a number of possibilities come to mind; we will examine three of them. 
Let $\optsub'$ and $\optsub''$ be two subspaces of $\R^d$ of same dimension.
The first loss is directly derived from the isometry prior $\Theta$ and expresses the distance between $\optsub'$ and $\optsub''$ as the minimal distance between isometries that orient $\optsub'$ and $\optsub''$ in operator norm.
Let $\Q_{\optsub'}$ be the set of isometries that send the subspace $\optsub'$ to $E_{\dim \optsub'} := \vect (e_1, \ldots, e_{\dim \optsub'})$: 
\[ \Q_{\optsub'} \ := \ \{ q \in \O_d : q^{-1} (E_{\dim \optsub'}) = \optsub' \}. \]
The distance between $\optsub'$ and $\optsub''$ is then defined as
\begin{equation}
	d_1 \pat{\optsub', \optsub''} \ := \ \min _{q \in \Q_{\optsub'}} \vertiii{q - q'}, \qquad \text{for any } q' \in \Q_{\optsub''},
\end{equation}
where $\vertiii{\cdot}$ is the operator norm with respect to the Euclidean distance in $\R^d$.
Note that $\Q_\optsub$ is the set of optimal isometries and that we want to show that the posterior of $\Theta$ converges to this set.

The second loss is a more natural way to express distances between subspaces. It will be taken as the Hausdorff distance between the intersection of the two subspaces with the unit sphere $\sphere_d$.
Let us define the \textit{directional Hausdorff distance} between two closed and bounded subsets $\mathcal{X}$ and $\mathcal{Y}$ of $\R^d$ as, 
\begin{equation*}
	\dirhaus(\mathcal{X}, \mathcal{Y}) \ = \ \sup _{x \in \mathcal{X}} d(x, \mathcal{Y}),
\end{equation*}
where $d(x, \mathcal{Y}) = \inf _{y \in \mathcal{Y}} \nor{x - y}{}$ is the Euclidean distance between $x$ and $\mathcal{Y}$.
The Hausdorff distance $\dhaus(\mathcal{X}, \mathcal{Y})$ is then the maximum between $\dirhaus(\mathcal{X}, \mathcal{Y})$ and $\dirhaus(\mathcal{Y}, \mathcal{X})$. 
Fortunately, if $\mathcal{X}$ and $\mathcal{Y}$ are the restriction to $\sphere_d$ of linear subspaces of same dimension, say $\optsub'$ and $\optsub''$, the last two quantities are equal. In fact, the directional Hausdorff distance is invariant under isometric transformation, so it suffices to apply an isometry that exchanges $\optsub'$ and $\optsub''$ to switch from one to another.
We thus define:
\begin{equation}\label{def:disthaus}
d_2(\optsub', \optsub'') \ := \ \dirhaus(\optsub' \cap \sphere_d, \optsub'' \cap \sphere_d). 
\end{equation}

The last is an adaptation of the popular \textit{projection loss} in principal component analysis and Sufficient dimension reduction (\cite{Tan2020SparseSIR}). If $P_{\optsub'}$ and $P_{\optsub''}$ are the orthogonal projections on $\optsub'$ and $\optsub''$ respectively, we define:
\begin{equation}\label{eq:Projloss}
	d_3(\optsub', \optsub'') \ := \ \nor{ P_{\optsub'} - P_{\optsub''}}{F},
\end{equation}
where $\nor{\cdot}{F}$ is the Frobenius norm.
It can be seen that the first two distances $d_1$ and $d_2$ are both equal to the Euclidean distance between the principal vectors corresponding to the largest principal angle between $\optsub'$ and $\optsub''$ (see \cite{Bhatia1997MatrixAnalysis}, Exercise VII.1.12, p 202. for a definition of the principal angles). Moreover, $d_3^2(\optsub', \optsub'')$ corresponds to twice the sum of the squared sines of the principal angles; this standard result is derived in the proof of Lemma \ref{lem:dist} for convenience.

As mentioned at the beginning of this section, the posterior consistency will only allow us to recover a subspace of $\R^d$ containing $\optsub$.
We thus extend the three preceding losses to subspaces with different dimensions in such a way that $d_i(\optsub, \optsub') = 0$ as soon as $\optsub \subset \optsub'$.
Let $\optsub'$ be a subspace of $\R^d$ such that $\dim \optsub' > \din$. The general losses are defined as follows:
\begin{equation}\label{eq:subspaceminmax}
	d_i(\optsub, \optsub') \ := \ \min _{\substack{\overline{\optsub} \supset \optsub \\ \dim \overline{\optsub} = \dim \optsub'}} d_i(\overline{\optsub}, \optsub') \ = \ \min _{\substack{\underline{\optsub} \subset \optsub' \\ \dim \underline{\optsub} = \din}} d_i(\optsub, \underline{\optsub}), \qquad i = 1, 2, 3,
\end{equation}
where the first minimum is taken over all subspaces of dimension $\dim \optsub'$ containing $\optsub$ and the second minimum is taken over $\din$-dimensional subspaces of $\optsub'$. A proof of the equality is provided in Appendix \ref{sec:Lem}.

A crucial assumption to eliminate the subspaces of dimension smaller than $\din$ and the subspaces that do not contain $\mathcal{S}$ is to suppose that the core function is non-constant in all directions. More precisely, the lack of constancy for each direction has to be detectable in Hellinger or in $L^2$ distances. The formulation of this hypothesis differs depending on whether we estimate a density or a regression function.
For the density estimation problem, recall that the true density $p^* = p_{f^*}$ is characterized by $f^*$ via the transformation \eqref{eq:density}. Moreover, $p^*$ can be viewed as the sparse continuation of a function $p_0$ over $\U_\din$, equal to $p_\fin$ up to a renormalization factor that depends on $d$. Note that $p_0$ is not necessarily a density on $\U_\din$ so the notation $h(f, g)$ will designate from now on the $L^2$-distance between the square roots of $f$ and $g$ as soon as the last quantity exists.



\begin{assumption}\label{ass:directdetect}
	There exist a constant $D$ and a window size $L < 1$ such that for all unit vector $\v$, there exists $o \in  B_{1-L}(0)$ such that for all $0 < l \leq L$, for all $t \in B_{L/2}(o)$, and for all constant $c \in \R$, 
	\begin{align*}
	 h^2 ({p_0}_{|I} ; \abs{c}) \ & \geq \ D \cdot l^3, \qquad \text{(density estimation)}, \\
	\text{or} \quad \bignor{\fin^Q_{|I} - c}{2}^2 \ & \geq \ D \cdot l^3, \qquad \text{(regression with random design)},
	\end{align*}
	where $I := \ ]t - \frac{l}{2} \v ; t + \frac{l}{2} \v[$ and where $\fin^Q = (\fin \vee -Q) \wedge Q$ as in \eqref{eq:PC_Rdesign}.
\end{assumption}

Assumption \ref{ass:directdetect} seems a bit technical at first glance but it can be shown that it is satisfied as soon as $\fin^Q$ and $p_0$ are continuously differentiable over $\U_\din$ with $\din$ points such that the gradients at these points are linearly independent. The precise statement can be found in Lemma \ref{lem:gradient}.

\begin{theorem}\label{thm:subspace_recovery}
	Under Assumption \ref{ass:directdetect} and the assumptions of Theorem \ref{thm:postcons}, we have, for some rates $(\delta^{(i)}_n)_n$ tending to zero and for both density estimation and regression with random design problems,  
	\begin{gather}
		\label{eq:recov_infd0} \Pi_n \pat{\Gamma < \din \ | \ Z_1, \ldots, Z_n} \tendto{n} 0, \qquad \text{in } \Prob_n^*\text{-probability},\\
		\label{eq:recov_supd0} \Pi_n \pat{\Gamma \geq \din \text{ and } d_i(\optsub, \Theta^{-1} (E_{\boldsymbol{\Gamma}})) \geq \delta^{(i)}_n \ | \ Z_1, \ldots, Z_n}\tendto{n} 0,  \qquad \text{in } \Prob_n^*\text{-probability},
	\end{gather}
	for $i = 1,2,3$ and where $Z_j = X_j$ for density estimation and $Z_j = (X_j, Y_j)$ for regression with random design, $j \in \eint{1, n}$.
	Moreover, for density estimation, we can take:
	\[\delta^{(1)}_n \ = \ \delta^{(2)}_n \ = \ \sqrt{2/D} \pat{d/L^2}^{(d+2)/4}\eps_n, \qquad \delta^{(3)}_n \ = \ \sqrt{2\din} \cdot \delta^{(1)}_n, \]
	provided $n$ is sufficiently large such that these quantities are strictly less than 1;
	for regression with random design, we can take the same rates divided by the square root of the minimum of the design density $G_n = G$ over its support.
\end{theorem}

Theorem \ref{thm:subspace_recovery} ensures that the central subspace $\mathcal{S}$ can be recovered as soon as the intrinsic dimension $\din$ is known. Subspaces of dimension smaller than $\din$ are also eliminated but the theorem does not reject those of dimension greater than $\din$. We conjecture that the prior mass on those spaces tends to vanish, for reasons similar to those exposed in \cite{Jiang2021VariableSelection}. Indeed, introducing a penalization on larger dimensions if necessary, it should be possible to show that the posterior cannot contract as fast as the minimax rate for $\din$ if a subspace of greater dimension is chosen.
This regularization property of the prior is well expected and has already been observed in other contexts (\cite{Rousseau2011Asymptotic}).
As discussed in the introduction of this section, the estimation of the central subspace is made under the assumption that $d$ is fixed with $n$ mainly because of the identifiability issue caused by the ellipsoidal support. This translates into an extremely penalized rate $\delta_n$ in the main theorem, with the factor $(d/L^2)^{(d+2)/4}$ coming from the cubic integration region constrained to fit within the unit ball. We believe that this restriction can be relaxed by extending the support $\U_{d}$ to the full ambient space $\R^{d}$, as in \cite{Jiang2021VariableSelection}. In this case, the hypercube over which we integrate the Hellinger distance in the proof of Theorem \ref{thm:subspace_recovery} can be taken as the product space of a square of side $L$ in directions $\boldsymbol{\Delta}$ and $\boldsymbol{\Lambda}$ times $\R^{d-2}$. Then, the integrated error should no longer depend on $d$ and consistency to the true subspace should follow. Further investigations in this direction might be worthwhile.
Another possibility, that keeps the compact support, could be to strengthen Assumption \ref{ass:directdetect} to force the default of constancy of $\fin$ to be detectable in Hellinger or $L^2(G_n)$ metrics regardless of the ambient dimension $d_n$. Instead of our local hypothesis, this would force the default of constancy to be more spread out over the support. Some popular models such as strictly convex or monotone functions fulfill these restrictions. 

The proof of Theorem \ref{thm:subspace_recovery} is postponed to Appendix \ref{sec:Proof_thmrecov}.

\paragraph{Acknowledgments}

We acknowledge the support of the French Agence Nationale de la Recherche (ANR) under reference ANR-21-CE40-0007 (GAP Project).
We are grateful to an Associate Editor and three Referees for their comments that led to an improvement of the paper.
We are also grateful to Iain Henderson for his help in improving the presentation of the manuscript.

\section{Appendix}

\subsection{Proof of Theorem \ref{thm:postcons}}\label{sec:Proof_MainTh}

The proof of Theorem \ref{thm:postcons} is based on Theorem 2.1 in \cite{Ghosal2000Cvrate}. The general outline is a combination of the arguments of \cite{Tokdar2011DimensionAdapt} (itself derived from \cite{Vaart2009AdaptBayes}) and \cite{Jiang2021VariableSelection}. For the growing intrinsic dimension framework, we use the new dimension-dependent lemmas from \cite{Castillo2024DHGP}.
Concretely, it suffices to show that there exists a sequence of sets $\B_n \subset \mathcal{C}(\U_{d_n})$ (referred to as a \textit{sieve}), such that the following three conditions hold for all sufficiently large $n$: 
\begin{gather}
	\label{eq:priormasscond} \Pi_n \pat{\nor{W^{A, \Gamma, \Theta} - f_n^*}{\infty} \leq 2\eps_n} \ \geq \ \exp(-n\eps_n^2), \\ 
	\label{eq:condsieve} \Pi_n \pat{W^{A, \Gamma, \Theta} \notin \B_n} \ \leq \ \exp(-5n\eps_n^2), \\
	\label{eq:condentropy} \log N \pat{ 3\eps_n, \B_n, \nor{\cdot}{\infty}} \ \leq \ n\eps_n^2.
\end{gather}
This is the purpose of the next sections.
The first condition \eqref{eq:priormasscond}, referred to as \textit{prior mass condition}, ensures that the prior puts a sufficient amount of mass around the true parameter. Condition \eqref{eq:condsieve}, called \textit{sieve condition}, forces the sieve $\B_n$ to capture most of the mass of the prior, while the \textit{entropy condition} \eqref{eq:condentropy} constrains its size.
These three conditions map one to one with the conditions of Theorem 2.1 in \cite{Ghosal2000Cvrate}, as shown in \cite{Vaart2008ContrateGaussian} for density estimation and regression with fixed design. 
For regression with random design, we recall in the next section some arguments spread out in the Bayesian literature.

\subsubsection{Regression with random design}\label{sec:RDregress}

Here, we show that Theorem 2.1 in \cite{Ghosal2000Cvrate} can be applied in the regression with random design setting, as soon as Conditions \eqref{eq:priormasscond}, \eqref{eq:condsieve}, and \eqref{eq:condentropy} are satisfied.
The procedure consists in showing that the posterior contracts to the density of a pair $(X_i, Y_i)$ and then retrieving $f^*_n$ from this density.
For a function $f : \U_{d_n} \to \R$, we define $P_f : \U_{d_n} \times \R \to \R_+, \ (x, y) \mapsto G_n(x) \cdot \Phi_{f(x), \sigma} (y)$, where $\Phi_{\mu, \sigma}$ is the density of a univariate Gaussian variable with mean $\mu$ and standard deviation $\sigma$ and $G_n$ is the density of one covariate. Then, the density of one observation $(X, Y)$ under regression with random design is $P_{f_n^*}$.
We first prove that Condition \eqref{eq:priormasscond} implies Condition (2.4) in \cite{Ghosal2000Cvrate} with $C = 1$. Detailed calculations can be found in \cite{Finocchio2023DGP}, Section A.2.
We have to compare the uniform neighborhood of $f_n^*$ with the Kullback-Leibler neighborhood
\[ B_2(P_{f_n^*} ; \eps) := \{ g : \mathrm{KL}(P_{f_n^*}, P_g) \leq \eps^2 , \ V_{2, 0} (P_{f_n^*}, P_g) \leq \eps^2 \}, \]
where $\mathrm{KL} (P_f, P_g) := P_f \cro{ \log (dP_f / dP_g)}$ is the Kullback-Leibler divergence between $P_f$ and $P_g$ and $V_{2, 0}(P_f, P_g) \allowbreak := P_f \cro{ \log (dP_f / dP_g) - \mathrm{KL}(P_f, P_g)}^2$ is the Kullback-Leibler variation.
Using the following identities from \cite{Finocchio2023DGP}:
\begin{align*}
	\mathrm{KL}(P_f, P_g) \ & = \ \frac{1}{2\sigma^2} \nor{f-g}{2, G_n}^2, \\
	V_2(P_f, P_g) \ & := \ P_f \cro{ \log \pat{\frac{dP_f}{dP_g}}^2} \ = \ \frac{1}{\sigma^2} \nor{f-g}{2, G_n}^2 \ + \ \pat{\frac{1}{2\sigma^2} \nor{(f-g)^2}{2, G_n}}^2, \\
	V_{2, 0}(P_f, P_g) \ & = \ V_2 (P_f, P_g) \ - \ \mathrm{KL}(P_f, P_g)^2,
\end{align*}
we deduce that, if $\nor{f-g}{\infty} \leq 2\eps$ with $2\eps < 1$, then 
\begin{align*}
	\mathrm{KL}(P_f, P_g) \ & \leq \ \frac{1}{2\sigma^2} \nor{f-g}{\infty}^2 \ \leq \ \frac{2\eps^2}{\sigma^2}, \\
	V_{2, 0}(P_f, P_g) \ & \leq \ 4 C_\sigma^2 \cdot \eps^2, 
\end{align*}
where $C_\sigma := \sqrt{1/\sigma^2 + 1/(4\sigma^4)}$.
Consequently, according to Condition \eqref{eq:priormasscond}, we have
\[ \Pi_n \pat{B_2 (P_{f_n^*} ; \eps_n)} \ \geq \ \Pi_n \pat{\nor{W^{A, \Gamma, \Theta} - f_n^*}{\infty} \leq 2\frac{\eps_n}{2C_\sigma}} \ \geq \ \exp \pat{-\frac{1}{4C_\sigma^2} n \eps_n^2}. \]
One can remark that for Condition (2.4) in \cite{Ghosal2000Cvrate} to be satisfied, we must have $(4C_\sigma^2)^{-1} \leq 1$ which is the case as soon as $\sigma \leq 2$.

Condition (2.3) in \cite{Ghosal2000Cvrate} is immediately deduced from \eqref{eq:condsieve}.
For Condition (2.4) in the same reference, we use the inequality
\begin{equation}\label{ineq:1}
	h(P_f, P_g) \ \leq \ \frac{1}{2\sigma} \nor{f - g}{\infty}, 
\end{equation}
see again \cite{Finocchio2023DGP} for details.
Then, assuming that $\sigma \geq 1$ and according to \eqref{eq:condentropy}, we have
\[ D \pat{\eps_n, \B_n, h} \ \leq \ N \pat{ \frac{\eps_n}{2}, \B_n, h} \ \leq \ N \pat{ \frac{\eps_n}{2\sigma}, \B_n, h} \ \leq \ N \pat{ \eps_n, \B_n, \nor{\cdot}{\infty}} \ \leq \ \exp (n\eps_n^2), \]
where the first inequality comes from the definition of the packing number $D$ and the covering number $N$ given in Section \ref{sec:Notation} and where the third inequality follows from \eqref{ineq:1}.
Theorem 2.1 in \cite{Ghosal2000Cvrate} then ensures posterior consistency to $P_{f_n^*}$ at rate $\eps_n$ in Hellinger distance.
Now, because we also have the converse inequality
\[ h^2(P_f, P_g) \ \geq \ \frac{1}{4\sigma^2} \exp \pat{-\frac{Q^2}{2\sigma^2}} \cdot \nor{f-g}{2, G_n}^2 \qquad \text{if } \nor{f}{\infty} \leq Q \text{ and } \nor{g}{\infty} \leq Q \]
and because $h(P_{f^Q}, P_{g^Q}) \leq h(P_f, P_g)$ when nothing is assumed on $f$ and $g$ with $f^Q = (f \vee -Q) \wedge Q$, 
we obtain posterior contraction to $f_n^*$ at rate $\eps_n$ in the $L^2(G_n)$-distance:
\[ \Prob_n^* \cro{\Pi_n \pat{g \in \mathcal{C}(\U_{d_n}) : \bignor{{f_n^*}^Q - g^Q}{2, G_n} > D_\sigma^Q \cdot \eps_n \ | \ (X_1, Y_1), \ldots, (X_n, Y_n)}} \tendto{n} 0, \]
where $D_\sigma^Q := M \cdot 2\sigma \cdot \exp \pat{Q^2/(4\sigma^2)}$.

\begin{remark}
	The restriction to $[1, 2]$ for the standard deviation $\sigma$ can be relaxed. In fact, if $\sigma > 2$, then it suffices to consider Theorem 2.1 in \cite{Ghosal2000Cvrate} with $C = (4C^2_\sigma)^{-1}$. Condition (2.4) in \cite{Ghosal2000Cvrate} is then immediately satisfied and, for Condition (2.3), our proof of \eqref{eq:condsieve} can be adapted to replace 5 by $4+C$.
	On the contrary, if $0 < \sigma < 1$, Condition (2.2) in \cite{Ghosal2000Cvrate} can be satisfied by multiplying $\eps_n$ by $\sigma^{-1}$.
\end{remark}

\bigskip

\subsubsection{Prior mass condition \eqref{eq:priormasscond}}

We verify here that $\Pi_n \pat{\nor{W^{A, \Gamma, \Theta} - f_n^*}{\infty} \leq 2\eps_n} \ \geq \ \exp(-n\eps_n^2)$.
We first reduce the problem to deterministic dimension and direction by conditioning with $\Gamma = \dint$ and integrating over $\O_{d_n}$:
\[ \Pi_n \pat{\bignor{W^{A, \Gamma, \Theta} - f_n^*}{\infty} \leq 2\eps_n} \ \geq \ \pi_{\Gamma, n}(\dint) \int _{\O_{d_n}} \Pi_n \pat{\bignor{W^{A, \dint, q} - f_n^*}{\infty} \leq 2\eps_n} dq .\]
Now, we want to bound from below the integrand on a significant subset of $\O_{d_n}$.
We remark that if $q \in \O_{d_n}$ is such that $\nor{f_n^* - \Lambda_{q}(\fint)}{\infty} \leq \eps_n$, then
\begin{equation*}
	\Pi_n \pat{\bignor{W^{A, \dint, q} - f_n^*}{\infty} \leq 2\eps_n} \ \geq \ \Pi_n \pat{\bignor{W^{A, \dint, q} - \Lambda_{q}(\fint)}{\infty} \leq \eps_n}.
\end{equation*}
We will show that the right-hand side is bounded from below by $\exp (-\frac12 n \eps_n^2)$ and then, we will derive a lower bound on the measure of the set of isometries $q \in \O_{d_n}$ satisfying $\nor{f_n^* - \Lambda_q(\fint)}{\infty} \leq \eps_n $.

According to Lemma 5.3 in \cite{Vaart2008RKHS}, for $q \in \O_{d_n}$, we can write
\begin{align*}
	\Pi_n \pat{\bignor{W^{A, \dint, q} - \Lambda_q(\fint)}{\infty} \leq \eps_n} \ & = \ \int _0^\infty \Pi_n \pat{\bignor{W^{a, \dint, q} - \Lambda_q(\fint)}{\infty} \leq \eps_n} \pi_{n, \dint} (a)da \\
	& \geq \ \int _0^\infty \exp \pat{-\phi_{\Lambda_q(\fint)}^{a, \dint, q} (\eps_n/2)} \pi_{n, \dint} (a)da,
\end{align*}
where $\phi_{\Lambda_q(\fint)}^{a, \dint, q}$ is the concentration function introduced in \eqref{rem:concfunct}.
The ingredients for controlling the two terms of the concentration function are Lemmas \ref{lem:cfinf} and \ref{lem:smallballprob}; let us restrict the involved quantities to their range of validity.
Keeping notations of Lemma \ref{lem:cfinf}, let $a \in [T_n, 2T_n]$ where $T_n := \pat{\frac{2C_1^\dint K_n^2}{\eps_n}}^{1/\beta}$.
Up to making $C_1$ larger and assuming that $n$ is large enough, we can suppose that $\eps_n < 8$ and that
\[ T_n \ \geq \ \max \acc{8 ; \ \pat{\frac{4}{C_1^\dint K_n^2}}^{1/\beta} ; \ \frac{\sqrt{\log 2}}{2\sqrt{\dint}} }. \]
We then obtain $C_1^\dint K_n^2 a^{-\beta} \leq \eps_n/2 < 4$ and $a \geq \sqrt{\log 2}/(2\sqrt{\dint})$.

The hypothesis being verified, Lemma \ref{lem:cfinf} with $\eps = \eps_n/2$ writes
\[ \inf \acc{\nor{\overline{h}}{\H_{a, \dint, q}}^2 : \overline{h} \in \H_{a, \dint, q}, \  \nor{\overline{h} - \Lambda_q(\fint)}{\infty} \leq \eps_n/2 } \ \leq \ C_2^\dint K_n^2 \cdot a^{\dint}. \]
Then, using the expression \eqref{rem:concfunct} of the concentration function, a combination of the two lemmas gives
\begin{align*}
	\phi_{\Lambda_q(\fint)}^{a, \dint, q} (\eps_n/2) \ & \leq \ C_2^\dint K_n^2 \cdot a^{\dint} \ + \ C^\dint \dint^{c\dint} \cdot a^{\dint} \log (2a /\eps_n)^{\dint +1} \\
	& \leq \ \pat{ C_2^\dint K_n^2 (\log 2)^{-\dint-1} + C^\dint \dint^{c\dint}} a^{\dint} \log (2a /\eps_n)^{\dint +1},
\end{align*}
where the last inequality holds because $a \geq 8$. 
We can show that there exist constants $\mathcal{C}_1(\beta)$ and $\mathcal{C}_2(\beta)$ depending only on $\beta$ such that 
\begin{gather}
C_2^\dint K_n^2 (\log 2)^{-\dint-1} + C^\dint \dint^{c\dint} \ \leq \ \mathcal{C}_1(\beta) ^\dint K_n^2 \cdot \dint^{c\dint}, \\
\label{eq:4Tn} \log (4T_n / \eps_n) \ \leq \ \mathcal{C}_2(\beta) \cdot \pat{\dint + \log(1/\eps_n) + \log K_n}.
\end{gather}
Using that $1/\eps_n$ and $K_n$ are at most polynomial in $n$ and that $\dint \lesssim \sqrt{\log n}$ (Assumption \ref{ass:growth}), we deduce two more intermediate constants $\mathcal{C}_3(\beta, D_1, D_2)$ depending only on $\beta$, $D_1$, $D_2$ and $\mathcal{C}_4(\beta)$ depending only on $\beta$ for which
\begin{align*}
	\int _{T_n}^{2T_n} \exp \pat{-\phi_{\Lambda_q(\fint)}^{a, \dint, q} (\eps_n/2)} \pi_{n, \dint} (a)&da \ \geq \ \int _{T_n}^{2T_n} \exp \pat{- \mathcal{C}_1(\beta) ^\dint K_n^2 \cdot \dint^{c\dint} \cdot a^\dint \log (2a /\eps_n)^{\dint +1}} \pi_{n, \dint} (a)da \\
	\geq \ & \exp \pat{ - \mathcal{C}_1(\beta) ^\dint K_n^2 \cdot \dint^{c\dint} \cdot (2T_n)^\dint \log (4T_n/\eps_n)^{\dint+1}} \inf _{x \in [T_n, 2T_n]} \pi_{n, \dint} (x) \\
	(\text{Assumption }\ref{ass:resc}) \qquad \geq \ & \exp \pat{- \mathcal{C}_3(\beta, D_1, D_2) ^\dint K_n^2 \cdot \dint^{c\dint} \cdot (2T_n)^\dint \log (4T_n/\eps_n)^{\dint+1}} \\
	\text{by } \eqref{eq:4Tn} \qquad \geq \ & \exp \pat{-\mathcal{C}_4(\beta)^{\dint^2} K_n^{2 + 2\dint/\beta} \cdot \eps_n^{-\dint/\beta} \cdot (\log n)^{\dint +1}}.
\end{align*}
By taking $C_\eps(\beta) \geq (2 \mathcal{C}_4(\beta))^{\beta/(\beta+1)}$ in the expression of $\eps_n$, we can bound the term inside the last inverse exponential by $\frac12 n \eps_n^2$ and achieve
\begin{equation}
	\Pi_n \pat{\bignor{W^{A, \dint, q} - \Lambda_q(\fint)}{\infty} \leq \eps_n} \ \geq \  \exp \pat{-\frac12 n \eps_n^2}.
\end{equation}

At this point, the problem amount to bound from below the measure of the set of isometries $q \in \O_{d_n}$ satisfying $\nor{f_n^* - \Lambda_q(\fint)}{\infty} \leq \eps_n $. We denote by $\mathcal{A}_{\eps_n}$ this set.
The core function $\fint$ was chosen in the Hölder ball $\holder^\beta(\U_\dint ; K_n)$ of radius $K_n$ according to the norm $\holdern{\cdot}$.
Consequently, for all $q, q' \in \O_{d_n}$, 
\begin{align*}
	\nor{\Lambda_{q'}(\fint) - \Lambda_{q}(\fint)}{\infty} \ & = \ \sup _{x \in \U_{d_n}} \abs{\fint \pat{(q' x)_{\bdint}} - \fint \pat{(qx)_{\bdint}}} \ \leq \ \sqrt{\dint} K_n \cdot \vertiii{q' - q}^{1 \wedge \beta},
\end{align*}
where $\vertiii{\cdot}$ is the operator norm induced by the Euclidean norm and where the last inequality is obtained by distinguishing the cases $\beta \leq 1$ and $\beta > 1$.
From now on, it is apparently sufficient to compute the measure of a ball in $\O_{d_n}$ with radius $R_n := (\eps_n/(\sqrt{\dint}K_n))^{1 \vee \beta^{-1}}$. In fact, $B_{\O_{d_n}} \pat{q_n^*, \ R_n} \subset \mathcal{A}_{\eps_n}$. However, this leads to a design dimension $d_n$ not larger than $n^{\dint/(4\beta + 2\dint)}$. To obtain $d_n$ of order $n^{\dint/(2\beta + \dint)}$, we have to consider a larger subset.

\begin{notation}
	Let $F$ be a linear subspace of $\R^{d_n}$. We denote by $\O_{d_n}(F)$ the set of isometries that fix $F$:
	\[ \O_{d_n}(F) \ := \ \{ q' \in \O_{d_n} : q'_{|F} = \Id \}. \]
\end{notation}

\noindent
Then, for all $q' \in \O_{d_n}(\optsub_n)$, we have
\[ \Lambda_{q_n^* q'}(\fint) \ = \ \Lambda_{q_n^*}(\fint) \, \circ \, q' \ = \ f_n^*, \]
and, 
\[ \nor{f_n^* - \Lambda_q(\fint)}{\infty} \ = \ \nor{\Lambda_{q_n^* q'}(\fint) - \Lambda_q(\fint)}{\infty} \ \leq \ \sqrt{\dint}K_n \cdot \vertiii{q^*_n q' - q}^{1 \wedge \beta}. \]

For $\eps > 0$, we define the set
\[ \Q_{q_n^*, \eps} \ := \ \{ q \in \O_{d_n} : \exists q' \in \O_{d_n}(\optsub_n), \ \vertiii{q_n^* q' -q} \leq \eps \}. \]
Then, $\Q_{q^*_n, R_n} \subset \mathcal{A}_{\eps_n}$ and according to Lemma \ref{lem:measure1} in Appendix \ref{sec:Lem}, the mass of $\mathcal{A}_{\eps_n}$ satisfies
\[ \Prob ( \Theta \in \mathcal{A}_{\eps_n}) \ \geq \ \pat{\frac{2}{\pi d_n}}^{\tfrac{\dint}{2}} \cdot \pat{\pat{\frac{\eps_n}{\sqrt{\dint}K_n}}^{1 \vee \beta^{-1}} \frac{1}{16 \sqrt{\dint d_n}}}^{\dint (d_n-1)}.\]
Recall that we have the following lower bound:
\[ \Pi_n \pat{\bignor{W^{A, \Gamma, \Theta} - f_n^*}{\infty} \leq 2\eps_n} \ \geq \ \pi_{\Gamma, n}(\dint) \cdot \Prob ( \Theta \in \mathcal{A}_{\eps_n})  \cdot \exp \pat{-\frac12 n \eps_n^2}.\]
In order to establish the prior mass condition, it suffices to derive the greatest design dimension $d_n$ for which we can reach
\[ \Prob ( \Theta \in \mathcal{A}_{\eps_n}) \ \geq \ \pi_{\Gamma, n} (\dint)^{-1} \exp \pat{-\frac12 n \eps_n^2}. \]
For $n$ and $C_\eps$ large enough, a design dimension $d_n$ as specified in Assumption \ref{ass:growth} is appropriate and leads to Condition \eqref{eq:priormasscond}.


\subsubsection{Sieve condition \eqref{eq:condsieve}}

The second condition can be verified in a similar way to \cite{Jiang2021VariableSelection}.
As in the previous section, we will first treat the case with deterministic rescaling parameter, dimension, and direction and then integrate according to $A$, $\Gamma$, and $\Theta$.

We introduce a sequence $M_n$ with polynomial growth in $n$ and, for $1 \leq b \leq \dintup$, quantities $r_{n, b}$ such that
\begin{equation}\label{eq:rnb}
	r_{n, b}^{b} (\log r_{n, b})^{b+1} = C_r n \eps_n^2,
\end{equation}
for a large constant $C_r$. Assumption \ref{ass:growth} ensures that $r_{n, \dintup}$ tends to infinity with $n$.
The sieve $\B_n$ is defined as follows: 
\[ \B_n \ := \ \bigcup_{q \in \O_{d_n}} \mathcal{B}_{n, q}, \]
with
\[ \mathcal{B}_{n, q} \ := \ \bigcup _{b = 1}^{\dintup} \mathcal{B}_{n, b, q} \qquad \text{and} \quad \mathcal{B}_{n, b, q} \ := \ M_n \sqrt{r_{n, b}} \cdot \H_1^{r_{n, b}, b, q} + \eps_n B_1,\]
where $B_1$ and $\H_1^{a, b, q}$ are the unit balls in the Banach space $(\mathcal{C}^0(\U_{d_n}), \nor{\cdot}{\infty})$ and in the RKHS $\H_{a, b, q}$ respectively.

The nesting property of Lemma 4.7 in \cite{Vaart2009AdaptBayes} remains true in the present setting. That is, for $a \leq a'$, 
\[ \sqrt{a} \cdot \H_1^{a, b, q} \ \subseteq \ \sqrt{a'} \cdot \H_1^{a', b, q}. \]
Consequently, if $1 \leq a \leq r_{n, b}$, then
\[ M_n \H_1^{a, b, q} + \eps_n B_1 \ \subseteq \ M_n \sqrt{\frac{r_{n, b}}{a}} \cdot \H_1^{r_{n, b}, b, q} + \eps_n B_1 \ \subseteq \ \mathcal{B}_{n, b, q}.\]
By Borell's inequality (see \cite{Vaart2008RKHS}, Theorem 5.1, or \cite{Borell1975BrunnMinkowski}), for every $a \in [1, r_{n, b}]$, 
\begin{align*}
	\Pi_n (W^{a, b, q} \notin \B_n) \ & \leq \ \Pi_n (W^{a, b, q} \notin \mathcal{B}_{n, b, q}) \\
	& \leq \ \Pi_n(W^{a, b, q} \notin M_n \H_1^{a, b, q} + \eps_n B_1) \\
	& \leq \ 1 - \Phi \pat{ \Phi ^{-1} \pat{ \Pi_n \pat{\nor{W^{a, b, q}}{\infty} \leq \eps_n}} + M_n},
\end{align*}
where $\Phi$ is the cumulative distribution function of the standard normal distribution.
Now, because
\[ \Pi_n \pat{\nor{W^{a, b, q}}{\infty} \leq \eps_n} \ \geq \ \Pi_n \pat{\nor{W^{r_{n, b}, b, q}}{\infty} \leq \eps_n} \ = \ \exp \pat{-\phi_0^{r_{n, b}, b, q} (\eps_n)}, \]
we have
\[ \Pi_n (W^{a, b, q} \notin \B_n) \ \leq \ 1 - \Phi \pat{ \Phi ^{-1} \pat{ \exp \bigpat{-\phi_0^{r_{n, b}, b, q} (\eps_n)}} + M_n}. \]
For $n$ large enough, we can assume that $\eps_n \leq 4$ and $r_{n, b} \geq \sqrt{\log 2}/(2\sqrt{b})$, so according to Lemma \ref{lem:smallballprob}, there exists a constant $C_1(\beta)$ such that
\begin{equation}
	\phi_0^{r_{n, b}, b, q} (\eps_n) \ \leq \ C^b b^{cb} r_{n, b}^{b} \log \pat{\frac{r_{n, b}}{\eps_n}}^{b+1} \ \leq \ C_1(\beta)^{b+1} b^{cb} (\log n)^{b+1} \cdot n\eps_n^2.
\end{equation}
By taking $M_n^2$ a very large multiple of $C_1^{\dintup+1} \dintup^{c\dintup} (\log n)^{\dintup+1} \cdot n\eps_n^2$, we can reach $M_n \geq 4 \sqrt{\phi_0^{r_{n, b}, b, q}(\eps_n)}$. The second assertion of Lemma 4.10 in \cite{Vaart2009AdaptBayes} gives $M_n \geq -2 \Phi^{-1} \pat{\exp \bigpat{-\phi_0^{r_{n, b}, b, q} (\eps_n)}}$ which leads to the upper bound
\[ \Pi_n (W^{a, b, q} \notin \B_n) \ \leq \ 1 - \Phi( M_n/2) \ \leq \ \exp (-M_n^2/8). \]
Taking into account the random rescaling parameter $A$, we have, for sufficiently large $n$, 
\begin{align*}
	\Pi_n (W^{A, b, q} \notin \B_n) \ & \leq \ \int_{0}^{r_{n, b}} \Pi_n (W^{a, b, q} \notin \B_n) \pi_{n, b}(a)da \ + \ \pi_{n, b}(A \geq r_{n, b}) \\
	(\text{Assumption } \ref{ass:resc}) \quad & \leq \ \exp (-M_n^2/8) \ + \ D_3 \int _{r_{n, b}}^\infty \exp \pat{-D_4 a^{b} (\log a)^{b+1} } da \\
	& \leq \ \exp (-M_n^2/8) \ + \ D_3 \int _{r_{n, b}}^\infty D_4 a^{b-1} ((b+1)\log^b a + b\log^{b+1} a) \exp \pat{-D_4 a^{b} (\log a)^{b+1} } da \\
	& \leq \ \exp (-M_n^2/8) \ + \ D_3 \exp \pat{-D_4 r_{n, b}^{b} (\log r_{n, b})^{b+1}} \\
	& \leq \ \frac12 \exp (-5n\eps_n^2) \ + \ \frac12 \exp (-5n\eps_n^2) \\
	& = \ \exp (-5n\eps_n^2), 
\end{align*}
where the last inequality holds because $C_r$ is supposed to be large enough.

Now considering the prior on the sparsity pattern, we obtain
\begin{align*}
	\Pi_n (W^{A, \Gamma, \Theta} \notin \B_n) \ & = \ \sum _{b = 1}^{\dintup} \Pi_n(\Gamma = b) \int _{\O_{d_n}} \Pi_n(W^{A, b, q} \notin \B_n) dq \ \leq \ \exp (-5n\eps_n^2).
\end{align*}
That completes the proof of Condition \eqref{eq:condsieve}.

\subsubsection{Entropy condition \eqref{eq:condentropy}}

We use the notation and quantities of the previous section and suppose $n$ large enough so that $\eps_n < M_n \sqrt{r_{n, b}}$ and $r_{n, b} \geq 1/(96\sqrt{b})$ for all $b \in \eint {1, \dintup}$. According to Lemma \ref{lem:entropybound}, for all $q \in \O_{d_n}$, the metric entropy of $\mathcal{B}_{n, b, q}$ is bounded as follows:
\begin{align*}
	\log N \pat{2\eps_n, \mathcal{B}_{n, b, q}, \nor{\cdot}{\infty} } \ & = \ \log N \pat{2\eps_n, M_n \sqrt{r_{n, b}} \H_1^{r_{n, b}, b, q} + \eps_n B_1, \nor{\cdot}{\infty} } \\
	& \leq \ \log N \pat{\eps_n, M_n \sqrt{r_{n, b}} \H_1^{r_{n, b}, b, q}, \nor{\cdot}{\infty} }, \\
	& \leq \ C^b b^{4b} r_{n, b}^{b} \log \pat{M_n \sqrt{r_{n, b}} \eps_n^{-1}}^{b+1} .
\end{align*}
Then, the simple estimation $\log \pat{M_n \sqrt{r_{n, b}} \eps_n^{-1}} \lesssim \log n$ together with \eqref{eq:rnb} yields
\begin{equation}\label{eq:entrop_Bnbq}
	\log N \pat{2\eps_n, \mathcal{B}_{n, b, q}, \nor{\cdot}{\infty} } \ \lesssim \ C_r C^b b^{4b} n \eps_n^2 (\log n)^{b+1}.
\end{equation}
To extend these inequalities to the full sieve $\B_n$, we search a finite net $\mathcal{R}_n$ over $\O_{d_n}$ such that,
\begin{equation}\label{eq:Sievenet}
	\B_n \ = \ \bigcup_{q \in \O_{d_n}} \mathcal{B}_{n, q} \ \subseteq \ \bigcup_{\overline{q} \in \mathcal{R}_n} \pat{\mathcal{B}_{n, \overline{q}} + \eps_n B_1}.
\end{equation}
This enables us to bound the $3\eps_n$-entropy of $\B_n$ by the cardinal of the net $\mathcal{R}_n$ times the maximal $2\eps_n$-entropy of sets $\mathcal{B}_{n, b, q}$:
\begin{align*}
	N \pat{3\eps_n, \B_n, \nor{\cdot}{\infty}} \ & \leq \ \sum _{\overline{q} \in \mathcal{R}_n} N \pat{3\eps_n, \mathcal{B}_{n, \overline{q}} + \eps_n B_1, \nor{\cdot}{\infty}}  \\
	& \leq \ \sum _{\overline{q} \in \mathcal{R}_n} N \pat{ 2\eps_n, \mathcal{B}_{n, \overline{q}}, \nor{\cdot}{\infty}} \\
	& \leq \ \sum _{\overline{q} \in \mathcal{R}_n} \sum _{b = 1}^{\dintup} N \pat{2\eps_n, \mathcal{B}_{n, b, \overline{q}}, \nor{\cdot}{\infty}} \\
	& \leq \ \abs{\mathcal{R}_n} \cdot \dintup \max _{\substack{1 \leq b \leq \dintup \\[0.07cm] \overline{q} \in \mathcal{R}_n}} N \pat{2\eps_n, \mathcal{B}_{n, b, \overline{q}}, \nor{\cdot}{\infty} } \\
	\text{by } \eqref{eq:entrop_Bnbq} \quad & \lesssim \ \abs{\mathcal{R}_n} \cdot C^\dintup \dintup^{4\dintup+1} \cdot n \eps_n^2 (\log n)^{\dintup+1}.
\end{align*}
To achieve this, we need the following lemma from \cite{Tokdar2011DimensionAdapt}.
\begin{lemma}[Tokdar 2011, Lemma 1]\label{lem:Tok}
	Let $a > 0$, $b < d_n$, and $q, \tilde{q} \in \O_{d_n}$. Then 
	\[ \H_1^{a, b, q} \ \subseteq \ \H_1^{a, b, \tilde{q}} \ + \ a \sqrt{2b} \cdot \vertiii{q - \tilde{q}} B_1, \]
	where $B_1$ is the unit ball in $(\mathcal{C}^0(\U_{d_n}), \nor{\cdot}{\infty})$.
\end{lemma}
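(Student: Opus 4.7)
The plan is to use the integral representation \eqref{eq:repr_thm} of $\H_{a,b,q}$ and to transfer any element of $\H_1^{a,b,q}$ into an element of $\H_1^{a,b,\tilde{q}}$ by simply replacing $q$ with $\tilde{q}$ in the integrand while keeping the same spectral weight. Concretely, I would start from an arbitrary $\overline{h} \in \H_1^{a,b,q}$ written as
\[
\overline{h}(x) \ = \ \re \int_{\R^b} e^{-i \inner{\lambda}{(qx)_\b}} \psi(\lambda) \, d\mu^{se}_{a,b}(\lambda), \qquad \nor{\psi}{L^2(\mu^{se}_{a,b})} \ = \ \nor{\overline{h}}{\H_{a,b,q}} \ \leq \ 1,
\]
and then define
\[
\tilde{h}(x) \ := \ \re \int_{\R^b} e^{-i \inner{\lambda}{(\tilde{q}x)_\b}} \psi(\lambda) \, d\mu^{se}_{a,b}(\lambda),
\]
which by construction belongs to $\H_{a,b,\tilde{q}}$ with $\nor{\tilde{h}}{\H_{a,b,\tilde{q}}} = \nor{\psi}{L^2(\mu^{se}_{a,b})} \leq 1$, i.e.\ $\tilde{h} \in \H_1^{a,b,\tilde{q}}$.

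Next, I would control $\nor{\overline{h} - \tilde{h}}{\infty}$. Using the elementary inequality $|e^{-iu} - e^{-iv}| \leq |u - v|$, Cauchy--Schwarz, and the bound $\nor{(qx)_\b - (\tilde{q}x)_\b}{} \leq \nor{(q - \tilde{q})x}{} \leq \vertiii{q - \tilde{q}}$ valid for $x \in \U_{d_n}$, I would obtain
\[
|\overline{h}(x) - \tilde{h}(x)| \ \leq \ \vertiii{q - \tilde{q}} \cdot \nor{\psi}{L^2(\mu^{se}_{a,b})} \cdot \pat{\int_{\R^b} \nor{\lambda}{}^2 \, d\mu^{se}_{a,b}(\lambda)}^{1/2}.
\]

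The only remaining computation is the second moment of $\mu^{se}_{a,b}$. Reading off the density $f^{se}_{a,b}(t) = (2a\sqrt{\pi})^{-b} \exp(-\nor{t/a}{}^2/4)$, one recognizes $\mu^{se}_{a,b}$ as the law of a centered Gaussian vector with covariance $2a^2 I_b$; hence $\int_{\R^b} \nor{\lambda}{}^2 d\mu^{se}_{a,b}(\lambda) = 2b a^2$. Combining with the previous display and the bound $\nor{\psi}{L^2(\mu^{se}_{a,b})} \leq 1$ gives $\nor{\overline{h} - \tilde{h}}{\infty} \leq a\sqrt{2b} \cdot \vertiii{q - \tilde{q}}$, so that $\overline{h} = \tilde{h} + (\overline{h} - \tilde{h}) \in \H_1^{a,b,\tilde{q}} + a\sqrt{2b}\cdot \vertiii{q - \tilde{q}} B_1$, which is the claim.

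I do not anticipate any serious obstacle: the argument relies only on the explicit spectral representation of the RKHS and on the symmetry of the construction under the change $q \leftrightsquigarrow \tilde{q}$. The single point that requires a bit of care is the identification of $\mu^{se}_{a,b}$ with a centered Gaussian measure on $\R^b$ and the resulting value $2ba^2$ of its second moment, which is however immediate from the given form of the spectral density.
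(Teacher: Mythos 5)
Your argument is correct. The paper does not reprove this lemma (it simply cites Tokdar 2011, Lemma 1), but your derivation is self-contained and uses the natural route given the spectral representation \eqref{eq:repr_thm}: replace $q$ by $\tilde q$ in the Fourier integrand to obtain $\tilde h\in\H_1^{a,b,\tilde q}$, then bound $\nor{\overline h-\tilde h}{\infty}$ via $|e^{-iu}-e^{-iv}|\le|u-v|$, Cauchy--Schwarz, and $\nor{(qx)_\b-(\tilde q x)_\b}{}\le\vertiii{q-\tilde q}$; the identification of $\mu^{se}_{a,b}$ with $\mathcal N(0,2a^2 I_b)$ giving second moment $2ba^2$ is also correct, yielding the constant $a\sqrt{2b}$.
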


By examining the representation result in \eqref{eq:repr_thm} for $\H_{a, b, q}$, we see that, for all $q' \in \O_{d_n}(q^{-1}(E_\b))$, we have $\H_{a, b, q} = \H_{a, b, qq'}$. Hence, Lemma \ref{lem:Tok} gives
\[ \H_1^{a, b, q} \ \subseteq \ \H_1^{a, b, \tilde{q}q'} \ + \ a \sqrt{2b} \cdot \vertiii{q - \tilde{q}} B_1. \]
If $\mathcal{R}_n$ is a net over $\O_{d_n}$ such that for all $q \in \O_{d_n}$, there exist $q' \in \O_{d_n}\Bigpat{q^{-1}\bigpat{E_{\bdintup}}}$ and $\overline{q} \in \mathcal{R}_n$ with $\vertiii{qq' - \overline{q}} \leq \zeta_n$, where $\zeta_n$ is the minimum of $\eps_n/\Bigpat{M_n r_{n, b}^{3/2} \sqrt{2\dintup}}$ when $b$ runs through $\eint{1, \dintup}$, then
\begin{align*}
	M_n \sqrt{r_{n, b}} \cdot \H_1^{r_{n, b}, b, q} \ & \subseteq \ M_n\sqrt{r_{n, b}} \cdot \H_1^{r_{n, b}, b, \overline{q}} \ + \ M_n r_{n, b}^{3/2} \sqrt{2b} \cdot  \vertiii{qq' - \overline{q}} B_1 \\
	& \subseteq \ M_n \sqrt{r_{n, b}} \cdot \H_1^{r_{n, b}, b, \overline{q}} \ + \ \eps_n B_1 \\
	& = \ \mathcal{B}_{n, b, \overline{q}}.
\end{align*}
This clearly implies $\mathcal{B}_{n, q} \ \subseteq \ \mathcal{B}_{n, \overline{q}} + \eps_n B_1$ and hence \eqref{eq:Sievenet}.
It only remains to bound the cardinal of $\mathcal{R}_n$.
That is the result of Lemma \ref{lem:netEntropy}, which yields 
\[ N \pat{3\eps_n, \B_n, \nor{\cdot}{\infty}} \ \lesssim \ \pat{\frac{\pi \sqrt{\dintup d_n}}{2}}^{\dintup} \pat{\frac{16M_n r_n^{3/2} \sqrt{2} d_n \dintup}{\eps_n}}^{\dintup (d_n + \dintup -2)} C^\dintup \dintup^{4\dintup+1} \cdot n \eps_n^2 (\log n)^{\dintup +1}, \]
where $r_n := \max \{r_{n, b} : b \in \eint{1, \dintup} \} \leq C_r n \eps_n^2$. 
Taking the logarithm of both sides and choosing $n$ and $C_\eps$ sufficiently large leads to the desired result.

\subsection{Proof of Theorem \ref{thm:subspace_recovery}}\label{sec:Proof_thmrecov}

We provide the full proof for the density estimation problem and then show how it can be adapted for regression with random design.

\subsubsection{Case $\Gamma < \din$}
The idea of the proof is to show that the non-constancy of $p_0$ in all directions results in a significant difference (in the Hellinger sense) between the true density $p^*$ and any density that is sparser than $p^*$. If this difference can be bounded from below, then the set of densities that are too sparse is expected to have an almost-null posterior mass as soon as the contraction rate falls below the lower bound.

\def\pti {\tilde{p}}
Let $q \in \O_d$ and let $\pti$ be a density that satisfies the model with parameters $\Gamma$ and $q$. Then, $\pti$ is constant on $q^{-1} (E_{1-\boldsymbol{\Gamma}}) + x$, for any $x \in \U_d$. 
Moreover, the intersection between $\mathcal{S}$ and $q^{-1} (E_{1 -\boldsymbol{\Gamma}})$ is non-null so $\pti_{| \mathcal{S}}$ is constant in at least one direction, say $\boldsymbol{\Delta} \in \mathcal{S}$. We will use Assumption \ref{ass:directdetect} and integrate the Hellinger distance over a small hypercube inside the region where $p^*$ is non-constant in $\boldsymbol{\Delta}$.

Let us introduce the operator
\begin{align*}
	\Psi : \ \R^\din & \ \to \ \mathcal{S} \\
	x & \ \mapsto \ (q^*)^{-1} (x^{\dO}) ,
\end{align*}
where $\dO = \sum_{i = 1}^\din e_i$.
In particular, we have $p^* \circ \Psi = p_0$.
We use the notation of Assumption \ref{ass:directdetect} with $\Psi^{-1}(\boldsymbol{\Delta})$ instead of $\v$.

Let $(\boldsymbol{\Delta}, u_1, \ldots, u_{\din -1} ; v_1, \ldots, v_{d-\din})$ be an orthonormal basis adapted to the direct sum $\R^d = \allowbreak  \Delta \allowbreak \oplus \allowbreak  (\Delta^\perp \cap \mathcal{S}) \allowbreak \oplus \allowbreak \mathcal{S}^\perp$, where $\Delta := \vect (\boldsymbol{\Delta})$, and let $R$ be a solid hypercube with edges parallel to this basis, of size $L/\sqrt{d}$ and centered on $\Psi(o)$.
Then, $R \subset B_{L/2}(\Psi(o))$ and the inequality of Assumption \ref{ass:directdetect} is valid when $t \in \Psi^{-1}(R)$.
Considering the basis previously introduced, integrating over $R$ amounts to integrate with respect to each variables. To simplify, we bundle these variables in three groups: a variable $\delta$ parallel to $\Delta$, a variable $u$ parallel to $\Delta^\perp \cap \mathcal{S}$ and a variable $v$ parallel to $\mathcal{S}^\perp$.
In this coordinate system, we can write $\Psi(o) = (\Psi(o)_1, \Psi(o)_2, 0)$ and $p^*(\delta, u, v) = p_0(\Psi^{-1}(\delta, u, 0))$. 
Then 
\begin{equation}\label{disp:hellingerror}
\begin{aligned}
	h^2(p^*_{|R} \, ; \, \pti _{|R}) \ = \ & \iiint _R \abs{\sqrt{p^*(\delta, u, v)} - \sqrt{\pti(0, u, v)}}^2 d\delta \, du \, dv \\
	\ = \ & \iint \pat{\int \abs{\sqrt{ p_0(\Psi^{-1}(\delta, u, 0))} - \sqrt{\pti (0, u, v)}}^2 d\delta } du \, dv \\
	\ = \ & \iint h^2 \pat{{p_0}_{|I_u} \, ; \, \pti (0, u, v) } du \, dv, 
\end{aligned}
\end{equation}
where $I_u$ is the inverse image via $\Psi$ of the range of the integral in $\delta$. Hence 
\begin{align*}
	\Psi(I_u) \ = \ (\Psi(o)_1, u, 0) \ + \ \Big] - \frac{L}{2\sqrt{d}} \boldsymbol{\Delta} \, ; \, \frac{L}{2\sqrt{d}} \boldsymbol{\Delta} \Big[ \qquad \text{with } u \in \Psi(o)_2 \ + \ \Big] - \frac{L}{2\sqrt{d}} \, ; \, \frac{L}{2\sqrt{d}} \Big[ ^{\, \din -1}.
\end{align*}
Then because $\Psi^{-1}(\Psi(o)_1, u, 0) \in B_{L/2}(o)$, there exists $t \in B_{L/2}(o)$ such that
\begin{equation}
	I_u \ =  \ t \ + \ \Big ] - \frac{L}{2\sqrt{d}} \Psi^{-1} (\boldsymbol{\Delta}) \, ; \, \frac{L}{2\sqrt{d}} \Psi^{-1} (\boldsymbol{\Delta}) \Big[.
\end{equation}
Now we can use Assumption \ref{ass:directdetect} and bound from below the Hellinger distance in the last integral, which gives  
\begin{align*}
	h^2(p^*_{|R} \, ; \, \pti _{|R}) \ \geq \ \iint D \cdot \pat{\frac{L}{\sqrt{d}}}^3 \, du \, dv  \ = \ D \cdot \pat{\frac{L}{\sqrt{d}}}^{d+2} .
\end{align*}
Finally, 
\[ \Pi_n (\Gamma < \din \ | \ X_1, \ldots, X_n) \ \leq \ \Pi_n \pat{f \in \mathcal{C}(\U_{d}) : h(p_{f}, p^*) > \eps _n \ | \ X_1, \ldots, X_n}, \]
as soon as the contraction rate achieves $\eps_n \leq \sqrt{D} \pat{\frac{L}{\sqrt{d}}}^{\frac{d+2}{2}}$.


\bigskip

\subsubsection{Case $\Gamma = \din$}

\paragraph{Case $\boldsymbol{\Gamma = \din}$, with $\mathbf{d = 2}$ and $\mathbf{\din = 1}$.}

\def\nonoptsub {{\mathcal{S}'}}

To simplify the presentation, we first restrict ourselves to the case $d = 2$ and $\din = 1$ and consider only the first loss $d_1$. The result with losses $d_2$ and $d_3$ will be obtained using Lemma \ref{lem:dist}.
Assumption \ref{ass:directdetect} specializes as follows: for all $0 < l \leq L$, there exists $o \in [-1+L, 1-L]$ such that, for all $t \in [o-l/2, o+l/2]$ and all constants $c > 0$, 
\[ h^2 \pat{{p_0}_{|]t - \frac{l}{2} ; t + \frac{l}{2}[} \, ; \, c} \ = \ \int_{ - \frac{l}{2}}^{\frac{l}{2}} \abs{ \sqrt{p_0(t+\lambda)} - \sqrt{c}}^2 d\lambda \ \geq \ D \cdot l^3 . \]

We use the fact that the non-constancy of $p^*$ over $\mathcal{S}$ induces a non-constancy over any one-dimensional space not parallel to $\mathcal{S}^\perp$. It is then possible to set a lower bound on the Hellinger distance between $p^*$ and any density that is constant on a space not parallel to $\mathcal{S}^\perp$.
For $q \in \O_2$, we denote $\nonoptsub := q^{-1}(E_{\dO})$. 
If $q$ is not in $\Q_\optsub$, then there exists $0 < \vartheta \leq \pi/2$ such that for all $\overline{q} \in \Q_\optsub$, we have $\vertiii{\overline{q} - q} > \vartheta$.
Then, the intersections of $\nonoptsub^\perp$ and $\mathcal{S}^\perp$ with the unit circle are separated by at least $\vartheta$.

\def\u {\mathbf{u}}
\def\v {\mathbf{v}}
\def\pr {\mathrm{pr}}
With this setting, any square of size $L/\sqrt{2}$ centered in $\Psi(o)$ is included in $\U_2$.
Let $R$ be a solid square of size $L/\sqrt{2}$, parallel to the line $\nonoptsub^\perp$ and centered on $\Psi(o)$.
The line $\nonoptsub^\perp + \Psi(o)$ intersects the border of $R$ at two points (see Figure \ref{fig:2dim}). Using arguments from geometry on the two-dimensional Euclidean space, we can show that the orthogonal projections of these points over $\mathcal{S}$ are at a distance $\zeta \geq \frac{L \vartheta}{4\sqrt{2}} \sqrt{4 - \vartheta^2}$ from $\Psi(o)$. 
Similarly, the line $\nonoptsub + \Psi(o)$ intersects the border of $R$ at two points whose orthogonal projections on $\mathcal{S}$ are at a distance $\chi \leq \frac{L}{2\sqrt{2}} \sqrt{1 - \vartheta^2 + \vartheta^4 /4}$ from $\Psi(o)$.

\begin{figure}[h!]
	\centering
	\includegraphics[scale=0.8]{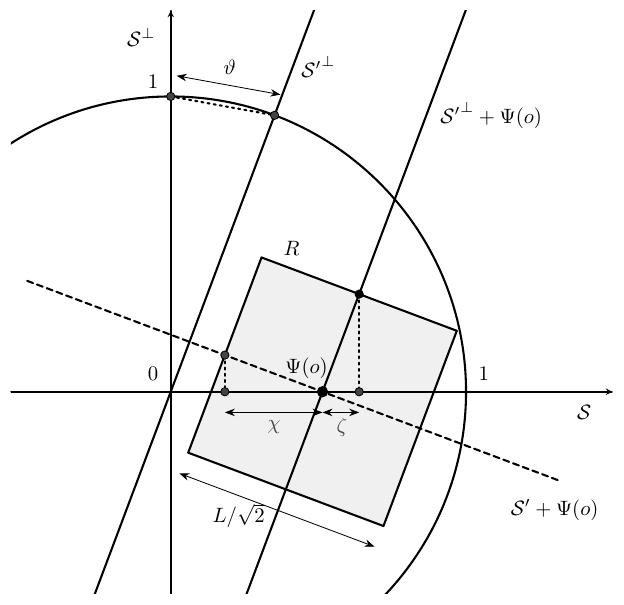}
	\caption{Illustration of the proof of Theorem \ref{thm:subspace_recovery} in the case $\Gamma = \din$ with $d = 2$ and $\din = 1$.}
	\label{fig:2dim}
\end{figure}

Let $(\u, \v)$ be an orthonormal basis of $\R^2$ adapted to the decomposition $\nonoptsub \oplus \nonoptsub^\perp$ and such that $\pr _{\mathcal{S}}(\u) = \frac{2 \sqrt{2}}{L} \chi \cdot \Psi(1)$ and $\pr _{\mathcal{S}}(\v) = \frac{2 \sqrt{2}}{L} \zeta \cdot \Psi(1)$. 
In this system of coordinates, $\Psi(o)$ can be written $(o_1, o_2)$ and for all $u, v \in \R^2$, we have
\[ \Psi^{-1} \pat{\pr _{\mathcal{S}} (u, v)} \ = \ \chi \cdot \frac{2 \sqrt{2}}{L} u \ + \ \zeta \cdot \frac{2 \sqrt{2}}{L} v. \]
We will also use the fact that $p^*(u, v) =  p_0 \pat{\Psi^{-1} \pat{\pr _{\mathcal{S}} (u, v)}}$. 
Then, for all density $\pti$ constant in the direction $\nonoptsub^\perp$, we have
\begin{align*}
	h^2 (p^*_{|R} \, ; \, \pti_{|R}) \ = \ & \iint _R |\sqrt{p^*(u, v)} - \sqrt{\pti(u, 0)}|^2 du \, dv \\
	\ = \ & \iint _R |\sqrt{p_0 \pat{\Psi^{-1} \pat{\pr _{\mathcal{S}} (u, v)}}} - \sqrt{\pti(u, 0)}|^2 du \, dv \\
	\ = \ & \int _{o_1 - L/(2\sqrt{2})}^{o_1 + L/(2\sqrt{2})} \int _{o_2 - L/(2\sqrt{2})}^{o_2 + L/(2\sqrt{2})} \abs{ p_0 \Bigpat{\chi \cdot \frac{2 \sqrt{2}}{L} u \ + \ \zeta \cdot \frac{2 \sqrt{2}}{L} v}^{1/2} - \sqrt{\pti(u, 0)}}^2 du \, dv \\
	\ = \ & \int _{-L/(2\sqrt{2})}^{L/(2\sqrt{2})} \int _{-L/(2\sqrt{2})}^{L/(2\sqrt{2})} \abs{ p_0 \Bigpat{o + \chi \cdot \frac{2 \sqrt{2}}{L} u \ + \ \zeta \cdot \frac{2 \sqrt{2}}{L} v}^{1/2} - \sqrt{\pti(u, 0)}}^2 dv \, du \\
	\ = \ & \int _{-L/(2\sqrt{2})}^{L/(2\sqrt{2})} \frac{L}{2\zeta \cdot \sqrt{2}} \pat{\int _{-\zeta}^{\zeta} \abs{ p_0 \Bigpat{o + \chi \cdot \frac{2 \sqrt{2}}{L} u \ + \ w}^{1/2} - \sqrt{\pti(u, 0)}}^2 dw }\, du \\
	(\text{Assumption }\ref{ass:directdetect}) \quad \geq \ &  \int _{-L/(2\sqrt{2})}^{L/(2\sqrt{2})} \frac{L}{2\zeta \cdot \sqrt{2}} \cdot D \cdot 8\zeta^3 \, du \ = \ 2D L^2 \cdot \zeta^2 \ \geq \ D \cdot \frac{L^4}{16} \vartheta^2 (4 - \vartheta^2).
\end{align*}
Finally, 
\[ \Pi_n \pat{\Gamma = 1 \text{ and } \min_{q \in \Q_\optsub} \vertiii{\Theta - q} \geq \vartheta \ | \ X_1, \ldots, X_n} \ \leq \ \Pi_n \pat{f \in \mathcal{C}(\U_{d}) : h(p_{f}, p^*) > \eps _n \ | \ X_1, \ldots, X_n}, \]
as soon as $\eps_n < \sqrt{ 2D L^2 \cdot \zeta^2}$.

\bigskip

\paragraph{Case $\boldsymbol{\Gamma = \din}$, with arbitrary $\mathbf{d > \din}$.}
Given a non-optimal subspace $\optsub'$, we need to quantify how far from $\mathcal{S}^\perp$ the orthogonal complement $\optsub'^\perp$ is. This result, elementary when $d = 2$, is stated for arbitrary $d > \din$ in the following lemma. A proof is given in Appendix \ref{sec:Lem}.
\def\thetbar {\overline{\vartheta}}
\def\nonoptsub {{\mathcal{S}'}}
\begin{lemma}\label{lem:dist}
	Let $q \in \O_d$ and denote $\nonoptsub := q^{-1}(E_{\dO})$ and $\mathbb{S}_d := \{x \in \R^d : \nor{x}{} = 1\}$. 
	For $\vartheta > 0$, if we suppose respectively, 
	\begin{align}
		\label{eq:lemIsonorm} \forall \ \overline{q} \in \Q_\optsub, \quad \vertiii{\overline{q} - q} \ > \ \vartheta, \\
		\label{eq:lemHausdist} \dhaus(\mathcal{S} \cap \mathbb{S}_d, \nonoptsub \cap \mathbb{S}_d) \ > \ \vartheta, \\
		\label{eq:lemFrobnorm} \nor{P_\mathcal{S} - P_\nonoptsub}{F} \ > \ \vartheta,
	\end{align}
	then there exists $r \in \nonoptsub^\perp \cap \mathbb{S}_d$, such that the distance between $r$ and $\mathcal{S}^\perp \cap \mathbb{S}_d$ is at least $\thetbar$, with $\thetbar = \vartheta$ in the first two cases \eqref{eq:lemIsonorm}, \eqref{eq:lemHausdist} and $\thetbar = \vartheta/\sqrt{2 \din}$ for the case \eqref{eq:lemFrobnorm}.
\end{lemma}

Now we work under the assumptions of Lemma \ref{lem:dist}.
Let $F$ be the linear span of $q^{-1}(r)$ and its orthogonal projection $\boldsymbol{\Lambda}$ on $\mathcal{S}^\perp$ (or any vector of $\mathcal{S}^\perp$ if the orthogonal projection is zero). Then $F$ has a non-zero intersection with $\mathcal{S}$. Let $\Delta$ be this one-dimensional intersection.

Let $R$ be a solid hypercube centered on $\Psi(o)$, with size $\overline{L} := L/\sqrt{d}$, and aligned with $q^{-1}(r), u_1, \ldots, u_{\din - 1}$ and $v_1, \ldots, v_{d - \din - 1}$ where $(\boldsymbol{\Delta}, u_1, \ldots, u_{\din-1}, \boldsymbol{\Lambda}, v_1, \ldots, v_{d - \din - 1})$ is an orthogonal basis adapted to the direct sum $\R^d = \mathcal{S} \oplus \mathcal{S}^\perp$. With the restrictions on $o$, $R$ is included in $\U_d$.

We will bound from below the quantity $h^2 (p^*_{|R} ; \pti_{|R})$ by using the preceding two-dimensional case on slices of $R$.
For $t \in \{ 0 \} \times \prod_{i = 1}^{\din-1} [o - \overline{L}/2 \cdot u_i ; o +\overline{L}/2 \cdot u_i] \times \{ 0 \} \times \prod _{j = 1}^{d - \din - 1} [o - \overline{L}/2 \cdot v_j ; o +\overline{L}/2 \cdot v_j]$, the plane $F + t$ contains one element parallel to $\mathcal{S}$ and one element parallel to $\mathcal{S}^\perp$, so the situation is analogue to the previous case, replacing $\zeta$ by $\overline{\zeta} := \frac{\overline{L}}{4} \overline{\vartheta} \sqrt{4 - \overline{\vartheta}^2}$ (Figure \ref{fig:dimd}). 
With all this in mind, for all density $\pti$ constant in the direction $q^{-1}(r)$, one has
\begin{equation}\label{eq:Hellingerrd}
	h^2 (p^*_{|R} \, ; \, \pti_{|R}) \ = \ \int _t h^2 (p^*_{|R\cap (G+t)} ; \pti_{|R\cap (G+t)}) dt \ \geq \ \int _t 4D\overline{L}^2  \overline{\zeta}^2 dt \ = \  4D\overline{L}^d \overline{\zeta}^2,
\end{equation}
which is sufficient to conclude. 
Statement \eqref{eq:recov_supd0} is valid as soon as $\eps_n < \sqrt{4D\overline{L}^d \overline{\zeta}^2}$ or alternatively with $\overline{\vartheta} \geq \sqrt{2/D} \pat{\sqrt{d}/L}^{(d+2)/2}\eps_n$, provided that the right side of the inequality is less than one, which is the case when $n$ is sufficiently large.

The case $\Gamma > \din$ can be proven in a similar way.

\begin{figure}[h!]
	\centering
	\includegraphics[scale=0.8]{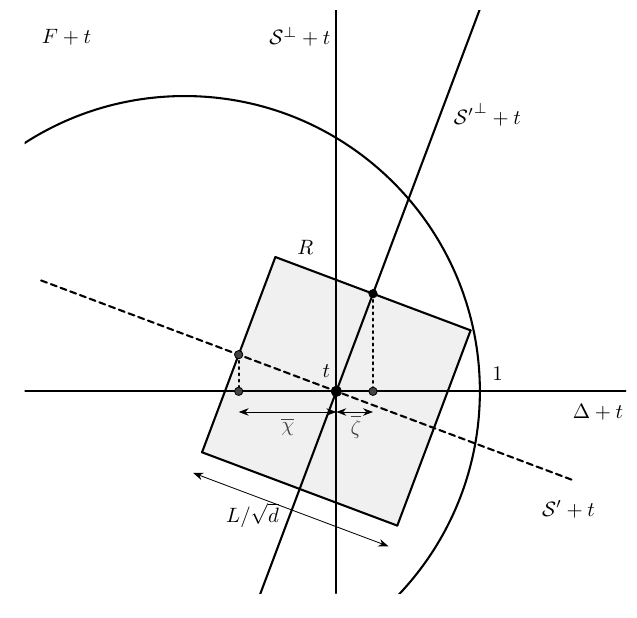}
	\caption{Illustration of the proof of Theorem \ref{thm:subspace_recovery} in the case $\Gamma = \din$ for arbitrary $d > \din$.}
	\label{fig:dimd}
\end{figure}

\subsubsection{Regression with random design.}

\def\fti {\tilde{f}}
\def\Gmin {\underline{G}}
The case of regression with random design is similar. We suppose without loss of generality that $\fin = \fin^Q$ and replace $p^*$ by $f^*$, $p_0$ by $\fin$ and $\pti$ by $\fti$ in the preceding proof. Then, $f^* \circ \Psi = \fin$ and the display \eqref{disp:hellingerror} rewrites
\begin{equation}
	\begin{aligned}
		\nor{f^*_{|R} - \fti _{|R}}{2, G}^2 \ = \ & \iiint _R \abs{f^*(\delta, u, v) - \fti(0, u, v)}^2 G(\delta, u, v) d\delta \, du \, dv \\
		\ \geq \ & \Gmin \iint \pat{\int \abs{\fin(\Psi^{-1}(\delta, u, 0)) - \fti (0, u, v)}^2 d\delta } du \, dv \\
		\ = \ & \Gmin \iint \nor{\fin_{|I_u} - \fti (0, u, v) }{2}^2 du \, dv, \\
		\ \geq \ & \Gmin D \cdot \pat{\frac{L}{\sqrt{d}}}^{d+2},
	\end{aligned}
\end{equation}
where $\Gmin$ is the minimum of the density $G$ over $\U_d$. This leads to
\[\Pi_n (\Gamma < \din \ | \ (X_1, Y_1), \ldots, (X_n, Y_n)) \ \leq \ \Pi_n \pat{f \in \mathcal{C}(\U_{d}) : \bignor{f^Q - {f^*}^Q}{2, G} > \eps _n \ | \ (X_1, Y_1), \ldots, (X_n, Y_n)}, \]
as soon as the contraction rate achieves $\eps_n \leq \sqrt{\Gmin D} \pat{\frac{L}{\sqrt{d}}}^{\frac{d+2}{2}}$.
The same idea leads to Statement \eqref{eq:recov_supd0}, with rates $\delta^{(i)}_n$ divided by $\Gmin$.

\subsection{Lemmas}\label{sec:Lem}

\subsection{Standard lemmas for contraction}

We first give specialized versions of classical lemmas from Bayesian nonparametrics with Gaussian processes, namely Lemmas 4.3, 4.5, and 4.6 in \cite{Vaart2009AdaptBayes}.
Thanks to the meticulous work proposed in \cite{Castillo2024DHGP} with Lemmas 2, 3 and 4, the dimension is made explicit to fit the growing intrinsic dimension framework. 
The next three results are a direct translation of these lemmas and their proofs can be omitted if we keep in mind that for $b \leq d$ two natural numbers, $\U_b \subset [-1, 1]^b$, and that the operator $\Lambda_{b,q}$ is an isometry from $\mathcal{C}(\U_b, \nor{\cdot}{\infty})$ to $\mathcal{C}(\U_d, \nor{\cdot}{\infty})$ and also from $\widetilde{\H}_a$ to $\H_{a, b, q}$.

\begin{lemma}\label{lem:cfinf}
	Let $b,d \in \N^*$, $b \leq d$ and $\beta > 0$. If $g \in \holder^\beta (\U_b ; K)$ for $K > 0$, then, for all $a > 0$ and $q \in \O_d$, there exist constants $C_1(\beta) = C_1$ and $C_2(\beta) = C_2$ that depend only on $\beta$ such that 
	\[ \inf \acc{ \nor{\overline{h}}{\H_{a, b, q}}^2 \ : \ \overline{h} \in \H_{a, b, q}, \  \nor{\overline{h} - \Lambda_{b,q}(g)}{\infty} \leq C_1^b K^2 \cdot a^{-\beta} } \ \leq \ C_2^b K^2 \cdot a^{b}. \]
\end{lemma}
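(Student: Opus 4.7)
The plan is to reduce the lemma to the known one-dimensional analog of van der Vaart and van Zanten, transferring the bound through the isometry $\Lambda$ already introduced in the paper.

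First, I would observe that, by Assumption \ref{ass:spars} and the definition of $\Lambda$, we have the identification $f_{n, q_n} = \Lambda f_0$, so approximating $f_{n, q_n}$ in the ambient space $\U_{d_n}$ is the same as approximating $f_0$ in $\U_{d_0}$ and then pushing forward. Since $\Lambda$ is a linear bijection from $(\mathcal{C}(\U_{d_0}), \nor{\cdot}{\infty})$ onto $(\mathcal{C}(\U_{d_n} \ | \ q_n^{-1}(E_{\dO})), \nor{\cdot}{\infty})$ and is an isometry for the uniform norm (as stated in the RKHS subsection), every function $\overline{h}$ in $\H_{a, d_0, q_n} = \Lambda(\widetilde{\H}_a)$ can be written $\overline{h} = \Lambda h$ with $h \in \widetilde{\H}_a$, and
\[ \nor{\overline{h} - f_{n, q_n}}{\infty} \ = \ \nor{h - f_0}{\infty}, \qquad \nor{\overline{h}}{\H_{a, d_0, q_n}} \ = \ \nor{h}{\widetilde{\H}_a}, \]
the second equality following from the representation \eqref{eq:repr_thm}, in which the RKHS norm is defined as $\nor{\psi}{L^2(\mu^{se}_{a, d_0})}$, the same for $h$ and for $\Lambda h$.

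Second, I would invoke the analogous result for the standard squared exponential process in fixed dimension $d_0$, namely Lemma 4.3 in \cite{Vaart2009AdaptBayes}: for any $f_0 \in \holder^\beta(\U_{d_0})$, there exist constants $C_{f_0}, D_{f_0}$, depending only on $f_0$, such that for all $a > 0$,
\[ \inf \bigacc{ \nor{h}{\widetilde{\H}_a}^2 : h \in \widetilde{\H}_a, \ \nor{h - f_0}{\infty} \leq C_{f_0} \cdot a^{-\beta}} \ \leq \ D_{f_0} \cdot a^{d_0}. \]
The proof of that cited lemma proceeds by convolving $f_0$ (extended to $\R^{d_0}$) against a suitable rescaled Gaussian kernel whose Fourier transform is supported in an appropriate region, using the Hölder smoothness of $f_0$ to bound the approximation error and the spectral representation \eqref{eq:repr_thm} to bound the RKHS norm. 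Since this argument only depends on the intrinsic dimension $d_0$ and the smoothness $\beta$, it carries over unchanged.

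Third, I would combine the two steps: let $h^*$ be any near-infimizer in the one-dimensional bound, and take $\overline{h} := \Lambda h^*$. The identifications above yield $\nor{\overline{h} - f_{n, q_n}}{\infty} \leq C_{f_0} a^{-\beta}$ and $\nor{\overline{h}}{\H_{a, d_0, q_n}}^2 \leq D_{f_0} a^{d_0}$, which is exactly the claim. The main obstacle, in fact the only substantive point, is verifying that $\Lambda$ preserves the RKHS norm across all $q_n \in \O_{d_n}$; this is where the spectral characterization \eqref{eq:repr_thm} is used, as the norm $\nor{\psi}{L^2(\mu^{se}_{a, d_0})}$ does not depend on $q_n$. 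Everything else is a direct transfer of a known one-dimensional result.
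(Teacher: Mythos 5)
Your argument is correct and follows exactly the path the paper intends: the paper explicitly declares that Lemmas \ref{lem:cfinf}, \ref{lem:entropybound}, and \ref{lem:smallballprob} are ``related to Lemmas 4.3, 4.5, and 4.6 in \cite{Vaart2009AdaptBayes}, hence their proofs can be omitted,'' and your proof supplies the omitted transfer via the isometry $\Lambda$, using that $\Lambda$ is a bijective isometry for the uniform norm and (by Lemma 7.1 of \cite{Vaart2008RKHS} and the shared spectral representation \eqref{eq:repr_thm}) preserves the RKHS norm independently of $q_n$.
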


\begin{lemma}\label{lem:entropybound}
	Let $b,d \in \N^*$, $b \leq d$ and $q \in \O_d$. Assume $a \geq 1/(96 \sqrt{b})$. Then, there exists a universal constant $C$ such that, for $\eps < 1$, 
	\[ \log N( \eps, \H_1^{a, b, q}, \nor{\cdot}{\infty}) \ \leq \ C^b b^{4b} \cdot a^b \pat{\log \frac1\eps}^{b+1}. \]
\end{lemma}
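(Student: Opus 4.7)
The plan is to reduce the problem to the already-known entropy bound for the unit ball of the RKHS of a standard squared exponential Gaussian process in dimension $b$, which is the content of Lemma 4.5 in \cite{Vaart2009AdaptBayes}. The bridge is the operator $\Lambda$ introduced just before Equation \eqref{eq:repr_thm}, which links the $b$-dimensional squared exponential RKHS $\widetilde{\H}_a$ and the RKHS $\H_{a,b,q_n}$ of $W^{a,b,q_n}$.

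First I would verify that $\Lambda$ is simultaneously an isometry for the RKHS norms and for the uniform norms. The RKHS-norm isometry is already recorded in the paper via Lemma 7.1 in \cite{Vaart2008RKHS}, which gives $\H_{a,b,q_n} = \Lambda(\widetilde{\H}_a)$ with matching norms. For the uniform norm, one checks that the map $x \mapsto (q_n x)_\b$ sends $\U_{d_n}$ onto $\U_b$: given any $y \in \U_b$, the element $x := q_n^{-1}(y^\b)$ lies in $\U_{d_n}$ (since $\|x\| = \|y^\b\| = \|y\| \leq 1$) and satisfies $(q_n x)_\b = y$. Consequently $\|\Lambda f\|_\infty = \sup_{x \in \U_{d_n}} |f((q_n x)_\b)| = \sup_{y \in \U_b} |f(y)| = \|f\|_\infty$.

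Combining the two isometry properties, $\Lambda$ sends the unit ball of $\widetilde{\H}_a$ bijectively onto $\H_1^{a, b, q_n}$ while preserving sup-norm distances, so
\[ N\bigpat{\eps, \H_1^{a, b, q_n}, \nor{\cdot}{\infty}} \ = \ N\bigpat{\eps, \widetilde{\H}_a^1, \nor{\cdot}{\infty}}, \]
where $\widetilde{\H}_a^1$ is the unit ball of the RKHS of the $a$-rescaled squared exponential process on $\U_b$. Finally, I would invoke Lemma 4.5 in \cite{Vaart2009AdaptBayes}, which yields a constant $L_b$ (depending only on $b$) such that
\[ \log N\bigpat{\eps, \widetilde{\H}_a^1, \nor{\cdot}{\infty}} \ \leq \ L_b \cdot a^b \pat{\log \tfrac{1}{\eps}}^{b+1}, \qquad \eps < 1/2, \]
and the lemma follows.

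There is no real obstacle here beyond bookkeeping: all the hard analytic work, namely bounding the entropy of the squared exponential RKHS in a fixed ambient dimension, is done in the cited reference. What remains is only to make sure the reduction via $\Lambda$ is sup-norm isometric so that no factor depending on $d_n$ or on $q_n$ is introduced along the way, which is precisely the surjectivity of $x \mapsto (q_n x)_\b$ from $\U_{d_n}$ to $\U_b$ noted above. The resulting constant $L_b$ is then independent of $d_n$, $q_n$, and $a$, as required.
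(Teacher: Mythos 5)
Your proof is correct and is exactly the argument the paper has in mind: the authors state that this lemma is "related to Lemma 4.5 in \cite{Vaart2009AdaptBayes}" and omit the proof, and the reduction you spell out through the bi-isometry $\Lambda$ (RKHS-norm isometry from Lemma 7.1 in \cite{Vaart2008RKHS}, plus the sup-norm isometry via surjectivity of $x \mapsto (q_n x)_\b : \U_{d_n} \to \U_b$) is precisely what makes that reference applicable with a constant independent of $d_n$ and $q_n$. The only thing worth noting is that Lemma 4.5 of \cite{Vaart2009AdaptBayes} is usually stated for $a$ bounded below by some $a_0>0$, but since Assumption~\ref{ass:resc}(3) forces $a>c>1$ whenever the prior is active, this has no bearing on the paper's use of the lemma.
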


\begin{lemma}\label{lem:smallballprob}
	Let $b,d \in \N^*$, $b \leq d$ and $q \in \O_d$. Then, there exist universal constants $c$ and $C$ such that for all $a \geq \sqrt{\log 2}/(2\sqrt{b})$ and $0 < \eps \leq 4$, 
	\[ -\log \Prob \pat{\nor{W^{a, b, q}}{\infty} \leq \eps} \ \leq \ C^b b^{cb} \cdot a^b \pat{\log \frac{a}{\eps}}^{b+1}. \]
\end{lemma}

\subsection{Lemmas for Section \ref{sec:sub_recov} and Appendix \ref{sec:Proof_thmrecov}}

The next lemma gives an intuitive condition for Assumption \ref{ass:directdetect} to be satisfied. It is followed by two proofs of intermediate results from Section \ref{sec:sub_recov}.

\begin{lemma}\label{lem:gradient}
	If $p_0$ and $\fin^Q$ are continuously differentiable over $\U_{\din}$ and if there exist a small $\delta > 0$ and $\din$ points $x_1, \ldots, x_\din$ in $B_{1-\delta}(0)$ such that the gradients at these points are linearly independent, then Assumption \ref{ass:directdetect} holds.
\end{lemma}
\begin{proof}[Proof of Lemma \ref{lem:gradient}]
	Let us begin with the assumption on the regression function. Without loss of generality, we suppose that $\fin = \fin^Q$.
	
	\def\u {\bold u}
	
	Let $\v \in \R^\din$, $\nor{\v}{} = 1$ be a direction and define $k_\v$ the index of the point whose gradient is most aligned with $\v$:
	\[ k_\v \ \in \ \argmax_{i = 1, \ldots, \din} \abs{\dua{\frac{\nabla \fin (x_i)}{\nor{\nabla \fin (x_i)}{}}, \v}} .\]
	Conversely, given $k \in \eint{1, \din}$, define the non-empty set $\mathcal{Y}_k := \{ \u \in \R^\din, \nor{\u}{} = 1, k_\u = k \}$.
	Then, the linear independence of the gradients ensures that the quantities
	\[ b_k \ := \ \inf_{\u \in \mathcal{Y}_k} \abs{\dua{\frac{\nabla \fin (x_k)}{\nor{\nabla \fin (x_k)}{}}, \u}} \ > \ 0,\]
	are strictly greater than zero.
	Consequently, for all $\v \in \R^\din$, $\nor{\v}{} = 1$, 
	\[ \abs{\dua{\nabla \fin (x_{k_\v}), \v}} \ \geq \ b_{k_\v} \nor{\nabla \fin (x_{k_\v})}{}.\]
	Now, by continuity of the differential of $\fin$, there exists for each $k \in \eint{1, \din}$ an open neighborhood $\mathcal{V}_k$ of $x_k$ such that for all $x \in \mathcal{V}_k$, 
	\[ \nor{\nabla \fin (x) - \nabla \fin (x_k)}{} \ \leq \ \frac12 b_k \cdot \nor{\nabla \fin (x_k)}{}. \]
	Moreover, these neighborhoods contain open balls $B_{\delta_k}(x_k) \subset \mathcal{V}_k$ with radius $\delta _k$ depending only on $\fin$, $\mathcal{X}$, and $k$.
	As a result, for $k \in \eint{1, \din}$, for all $\v \in \mathcal{Y}_k$, and for all $x \in \mathcal{V}_k$,
	\begin{align*}
		\abs{\inner{\nabla \fin (x)}{\v}} \ \geq \ & \bigabs{\abs{\inner{\nabla \fin (x) - \nabla \fin (x_k)}{\v}} - \abs{\inner{\nabla \fin (x_k)}{\v}}} \\ \geq \ & \frac12 \min _{i = 1, \dots, \din} \pat{ b_i \cdot \nor{\nabla \fin (x_i)}{}} \ =: \ r(\mathcal{X}).
	\end{align*}
	
	After these preliminary calculations, set $\v \in \R^d$ a direction, $\nor{\v}{} = 1$, and choose $o := x_{k_\v}$ and $L := \delta \wedge \min _{i = 1, \ldots, \din} \delta _i$.
	Then, for all $0 < l \leq L$ and for all $t \in B_{L/2}(o)$, we have, 
	\begin{equation}\label{eq:ineqonJ}
		\abs{\fin(o + t + y \v) - \fin(o+t)} \ \geq \ \abs{y} \cdot r(\mathcal{X}), \quad \forall y \in \ ]-l/2 , l/2[.
	\end{equation}
	Let $c \in \R$ and define the interval 
	\begin{align*}
		J \ := \ & [ o+t , o+t+ s \cdot l/2 \cdot \v[, \\ 
		\text{with} \quad s \ := \ & \sgn \pat{\fin(o+t+ l/2 \cdot \v) - \fin(o+t)} \cdot \sgn \pat{\fin(o+t) - c},
	\end{align*}
	where $\sgn(x) = 1$ if $x \in [0, + \infty[$ and $\sgn(x) = -1$ if $x \in \ ]-\infty, 0[$. The interval $J$ is such that $\abs{\fin (y) - c} \geq \abs{\fin (y) - \fin (o+t)}$, for all $y \in J$.
	Hence, 
	\begin{align*}
		\bignor{\fin _{|I} - c}{2}^2 \ \geq \ & \int _J \abs{\fin (z) - c}^2 dz \\
		\ \geq \ & \int _J \abs{\fin (z) - \fin (o+t)}^2 dz \\
		\text{by \eqref{eq:ineqonJ}} \qquad \geq \ & \int _0^{l/2} |y|^2 r(\mathcal{X})^2 dy \ = \ r(\mathcal{X})^2 \frac{l^3}{24}. 
	\end{align*}
	Consequently, Assumption \ref{ass:directdetect} holds with $D := r(\mathcal{X})^2/24$.
	
	For the density assumption, replace $\fin$ by $p_0$ in the above calculations until the introduction of the constant $c$, which we now choose to be positive.
	Up to replacing $\v$ by $-\v$, we can assume that $p_0(o+t+ l/2 \cdot \v) > p_0(o+t)$.
	Then, if $c > p_0(o+t)$, we have for all $-l/2 < y \leq 0$:
	\begin{align*}
		\abs{\sqrt{p_0(o+t+ y\v)} - \sqrt{c}} \ \geq \ & \abs{\sqrt{p_0(o+t+ y\v)} - \sqrt{p_0(o+t)}} \\
		\text{by \eqref{eq:ineqonJ}} \qquad \geq \ & |y| \cdot r(\mathcal{X}) \frac{1}{2\sqrt{p_0(o+t)}} \ \geq \ \frac{|y| \cdot r(\mathcal{X})}{2 \max_{x \in \U_\din} \sqrt{p_0(x)}},
	\end{align*}
	where the second inequality follows from the mean value theorem and from inequality \eqref{eq:ineqonJ}.
	Similarly, if $c \leq p_0(o+t)$, we have for all $0 \leq y < l/2$:
	\begin{align*}
		\abs{\sqrt{p_0(o+t+ y\v)} - \sqrt{c}} \ \geq \ & \abs{\sqrt{p_0(o+t+ y\v)} - \sqrt{p_0(o+t)}} \\
		\text{by \eqref{eq:ineqonJ}} \qquad \geq \ & |y| \cdot r(\mathcal{X}) \frac{1}{2\sqrt{p_0(o+t+y\v)}} \ \geq \ \frac{|y| \cdot r(\mathcal{X})}{2 \max_{x \in \U_\din} \sqrt{p_0(x)}}.
	\end{align*}
	Finally, 
	\[ h^2({p_0}_{|I} ; c) \ = \ \int_I \abs{\sqrt{p_0(z)} -\sqrt{c}}^2 dz \ \geq \ \frac{r(\mathcal{X})^2}{4 \max_{x \in \U_\din} p_0(x)} \int_0^{l/2} y^2 dy \ = \ \frac{r(\mathcal{X})^2}{96 \max_{x \in \U_\din} p_0(x)} \cdot l^3, \]
	and Assumption \ref{ass:directdetect} holds with $D := r(\mathcal{X})^2/(96 \max_{x \in \U_\din} p_0(x))$.
\end{proof}

\begin{proof}[Proof of Lemma \ref{lem:dist}]
	
	We begin with the Hausdorff distance case \eqref{eq:lemHausdist}. Following the definition \eqref{def:disthaus} of the Hausdorff distance, there exists $x \in \mathcal{S} \cap \mathbb{S}_d$ such that $d(x, \nonoptsub \cap \mathbb{S}_d) > \vartheta$. The transition to the orthogonal complement is straightforward using the notion of \textit{principal angles} between subspaces. There are many characterizations of principal angles but we are more interested in the one given in \cite{Bhatia1997MatrixAnalysis}, Exercise VII.1.12, p 202. The largest principal angle between $\mathcal{S}$ and $\nonoptsub$ is defined as
	\[ \theta_1 \ = \ \max _{\substack{y \in \mathcal{S}\\ \nor{y}{} = 1}} \min _{\substack{z \in \nonoptsub\\ \nor{z}{} = 1}} \arccos \pat{\transp{y} \cdot z}. \]
	Consequently, $\theta_1 > \arccos (1 - \vartheta^2/2)$.
	The principal angles between $\mathcal{S}$ and $\nonoptsub$ and between their orthogonal complements $\mathcal{S}^\perp$ and $\nonoptsub^\perp$ are essentially the same (see \cite{Knyazev2007PrincipalAngles}, Theorem 2.7). In particular, the largest principal angle between $\mathcal{S}^\perp$ and $\nonoptsub^\perp$ is greater than $\arccos (1 - \vartheta^2/2)$, hence there exists $r \in \mathcal{S}^\perp \cap \mathbb{S}_d$ such that $d(r, \nonoptsub^\perp \cap \mathbb{S}_d) > \vartheta$.
	The hypothesis \eqref{eq:lemIsonorm} implies the preceding one \eqref{eq:lemHausdist} and leads to the same conclusion.
	
	Now, suppose that the Frobenius norm between the orthogonal projections over $\optsub$ and $\nonoptsub$ is at least $\vartheta$ (hypothesis \eqref{eq:lemFrobnorm}). We use the fact that the square of the previous Frobenius norm is twice the sum of the squared sines of the principal angles between the two subspaces. To see this, let $M_\optsub$ and $M_\nonoptsub$ be matrices whose columns form an orthonormal basis of $\optsub$ and $\nonoptsub$ respectively. Then $M_\mathcal{S} \transp{M_\mathcal{S}} = P_\mathcal{S}$ is the orthogonal projection over $\mathcal{S}$ and
	\[ \nor{P_{\mathcal{S}} - P_{\mathcal{S'}}}{F}^2 \ = \ 2 \pat{\din - \nor{\transp{M_{\mathcal{S}}} M_{\mathcal{S'}}}{F}^2} \ = \ 2 \pat{\din - \sum_{i = 1}^{\din} \sigma_i^2 \pat{\transp{M_{\mathcal{S}}} M_{\mathcal{S'}}}},\]
	where the $\sigma_i$'s are the singular values of $\transp{M_{\mathcal{S}}} M_{\mathcal{S'}}$, which also coincide with the cosines of the principal angles between $\optsub$ and $\nonoptsub$ (see for example Remark 7.2 in \cite{Vidal2016PCA}). This gives
	\[ \nor{P_{\mathcal{S}} - P_{\mathcal{S'}}}{F}^2 \ = \ 2 \pat{d^* - \sum_{i = 1}^{\din} \cos^2(\theta_i)} \ = \ 2\sum_{i = 1}^{\din} \sin^2(\theta_i), \]
	where the $\theta_i$'s are the principal angles between $\mathcal{S}$ and $\mathcal{S}'$ in non-increasing order. Hypothesis \eqref{eq:lemFrobnorm} then leads to $\sin \theta_1 \geq \vartheta / \sqrt{2\din}$. In this case, the transition to orthogonal complement is straightforward using that
	\[ \nor{P_\optsub - P_\nonoptsub}{F} \ = \ \nor{I- P_\nonoptsub - (I - P_\optsub)}{F} \ = \ \nor{P_{\nonoptsub^\perp} - P_{\optsub^\perp}}{F}.\]
	So the sinus of the largest principal angle between $\optsub^\perp$ and $\nonoptsub^\perp$ is greater than $\vartheta / \sqrt{2\din}$, hence the conclusion.
	\qedhere
\end{proof}

\begin{proof}[Proof of \eqref{eq:subspaceminmax}]

	To see this equality, choose $\dim \optsub' - \din$ vectors $u_1, \ldots, u_{\dim \optsub' - \din}$ in $\optsub^\perp \cap \optsub'$ and consider $\overline{\optsub}_{\min} := \vect (\optsub, u_1, \ldots, u_{\dim \optsub' - \din})$. Then the lowest principal angles between $\overline{\optsub}_{\min}$ and $\optsub'$ are all equal to zero and the principal vectors in $\overline{\optsub}_{\min}$ are in $\vect(u_1, \ldots, u_{\dim \optsub' - \din})$. Consequently, the $\din$ largest principal angles are exactly the principal angles between $\optsub$ and $\optsub'$, which are also the principal angles between $\optsub$ and the $\din$-dimensional subspace that achieves the second minimum. Because the three distances can be expressed in term of principal angles, the equality holds.
\end{proof}

\subsection{Some results on the orthogonal group}

\begin{lemma}\label{lem:procruste}
	Let $d \in \N^*$ and let $(e_1, \ldots, e_d)$ be an orthonormal basis of $\R^d$. For $b \leq d$, let $(g_1, \ldots, g_b) \in \R^{d \times b}$ be a collection of orthonormal vectors in $\R^d$ such that 
	\[ \nor{e_i - g_i}{} \leq \eps, \quad \text{for all } i \in \eint{1, b}. \]
	Then we can complete this collection to obtain an orthonormal basis $(g_1, \ldots, g_d)$ of $\R^d$ satisfying
	\[ \nor{e_j - g_j}{} \leq 2\sqrt{b} \cdot \eps, \quad \text{for all } j \in \eint{1, d}.\]
\end{lemma}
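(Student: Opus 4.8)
The statement is the quantitative form of the elementary fact that two orthonormal $d$-tuples which are close coordinatewise can be completed to two close orthonormal bases. The plan is to build the missing vectors $g_{d+1},\dots,g_n$ by first projecting $e_{d+1},\dots,e_n$ onto the orthogonal complement of $V:=\vect(g_1,\dots,g_d)$ and then orthonormalising the projected family by the \emph{symmetric} (L\"owdin) procedure rather than by sequential Gram--Schmidt. The point of this choice is that the symmetric orthonormalisation redistributes the correction evenly among all vectors, so that the resulting error bound does not accumulate with $n-d$.

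First I would dispose of the trivial regime: if $2\sqrt d\,\eps\ge 2$ there is nothing to prove, since $\nor{e_j-g_j}{}\le\nor{e_j}{}+\nor{g_j}{}=2$ for any completion; so from now on assume $\sqrt d\,\eps<1$. Writing $P$ for the orthogonal projection onto $V$ and, for $j\in\eint{d+1,n}$, $f_j:=(I-P)e_j=e_j-\sum_{i=1}^d\langle g_i,e_j\rangle g_i$, the identity $\langle g_i,e_j\rangle=\langle g_i-e_i,e_j\rangle$ (valid for $i\le d<j$ since the $e$'s are orthonormal) gives $\nor{e_j-f_j}{}^2=\sum_{i=1}^d\langle g_i,e_j\rangle^2\le\sum_{i=1}^d\nor{g_i-e_i}{}^2\le d\eps^2$, hence $\nor{e_j-f_j}{}\le\sqrt d\,\eps$.

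Next I would control the Gram matrix $G:=\bigpat{\langle f_j,f_k\rangle}_{d<j,k\le n}$. From $\langle f_j,f_k\rangle=\delta_{jk}-\langle Pe_j,Pe_k\rangle$ one reads $G=I-\Phi^\top\Phi$, where $\Phi$ is the linear map sending $e_j$ ($j>d$) to $Pe_j\in V$; hence $\vertiii{G-I}=\vertiii{\Phi}^2\le\nor{\Phi}{F}^2=\sum_{j>d}\nor{Pe_j}{}^2=\sum_{i=1}^d\mathrm{dist}\bigpat{g_i,\vect(e_1,\dots,e_d)}^2\le\sum_{i=1}^d\nor{g_i-e_i}{}^2\le d\eps^2$, where $\nor{\cdot}{F}$ is the Frobenius norm. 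Thus $\mu:=\vertiii{G-I}\le d\eps^2<1$, so $G$ is positive definite; the $f_j$ are then linearly independent and, lying in the $(n-d)$-dimensional space $V^\perp$, they form a basis of it. I then set $g_j:=\sum_{k>d}(G^{-1/2})_{jk}f_k$ for $j>d$; since this is the symmetric orthonormalisation, $(g_{d+1},\dots,g_n)$ is an orthonormal basis of $V^\perp$, which together with the orthonormal basis $(g_1,\dots,g_d)$ of $V$ yields an orthonormal basis of $\R^n$ extending the given family.

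It remains to estimate $\nor{g_j-f_j}{}$ for $j>d$, and this is where the choice of orthonormalisation matters. From $g_j-f_j=\sum_{k>d}(G^{-1/2}-I)_{jk}f_k$ and the definition of $G$ one gets $\nor{g_j-f_j}{}^2=\bigcro{(G^{-1/2}-I)\,G\,(G^{-1/2}-I)}_{jj}$; since $G^{1/2}$ and $G^{-1/2}-I$ are commuting functions of $G$, the bracketed matrix equals $(I-G^{1/2})^2$, whence $\nor{g_j-f_j}{}^2\le\vertiii{I-G^{1/2}}^2\le\mu^2$ (the eigenvalues of $G$ lying in $[1-\mu,1]$, so $\vertiii{I-G^{1/2}}\le 1-\sqrt{1-\mu}\le\mu$). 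Therefore $\nor{e_j-g_j}{}\le\nor{e_j-f_j}{}+\nor{f_j-g_j}{}\le\sqrt d\,\eps+d\eps^2\le 2\sqrt d\,\eps$ using $\sqrt d\,\eps<1$, while for $j\le d$ the bound $\nor{e_j-g_j}{}\le\eps\le 2\sqrt d\,\eps$ is immediate. I expect the main obstacle to be exactly this final step: naive sequential Gram--Schmidt on $(f_{d+1},\dots,f_n)$ would propagate the off-diagonal entries of $G$ along the process and give a bound growing like $\sqrt{n-d}$, useless here; it is the symmetric square-root orthonormalisation, via the identity $(G^{-1/2}-I)\,G\,(G^{-1/2}-I)=(I-G^{1/2})^2$, that produces the dimension-free constant.
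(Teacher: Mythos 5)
Your proof is correct. The first half coincides with the paper's: both project $e_j$ onto the orthogonal complement of $\vect(g_1,\dots,g_d)$ and obtain $\nor{e_j-f_j}{}\le\sqrt d\,\eps$ by the same Cauchy--Schwartz computation. Where you diverge is the orthonormalisation step and its error analysis. The paper forms the matrix $A=(f_{d+1}|\cdots|f_n)$, takes its polar factor $U$, and invokes Higham's Procrustes theorem (the polar factor is the closest matrix with orthonormal columns) with the competitor $Q=(e_{d+1}|\cdots|e_n)$ to get $\vertiii{A-U}\le\sqrt d\,\eps$, hence the total bound $2\sqrt d\,\eps$ with no restriction on $\eps$ and no invertibility issue (the SVD-based polar factor exists even if the $f_j$ degenerate). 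Your L\"owdin orthonormalisation $AG^{-1/2}$ is in fact the same polar factor, so you construct the same completion; but instead of citing the best-approximation property you bound $\vertiii{I-G^{1/2}}\le\vertiii{I-G}\le d\eps^2$ by spectral calculus on the Gram matrix. This buys a self-contained argument (no external Procrustes theorem) and a sharper second-order error term $\sqrt d\,\eps+d\eps^2$, at the modest cost of the case split $\sqrt d\,\eps<1$ needed both for the invertibility of $G$ and to fold $d\eps^2$ back into $2\sqrt d\,\eps$ — a case split you handle correctly via the trivial bound $\nor{e_j-g_j}{}\le 2$. All the intermediate identities you use (the Gram identity $G=I-\Phi^{\top}\Phi$, the commutation giving $(G^{-1/2}-I)G(G^{-1/2}-I)=(I-G^{1/2})^2$, and the diagonal-entry bound by the operator norm) check out.
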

\begin{proof}[Proof of Lemma \ref{lem:procruste}]
	We denote by $F$ the subspace $\vect(g_1, \ldots, g_b)$. Let us determine the distance between a vector $e_j$ and its orthogonal projection on $F^\perp$, for $j \in \eint{b+1, d}$. By Cauchy-Schwartz inequality, we have
	\[ \abs{\inner{e_j}{g_i}} \ \leq \ \nor{e_j}{} \nor{g_i - e_i} \ \leq \ \eps, \]
	for all $i \in \eint{1, b}$. Then
	\begin{equation}\label{eq:procruste}
		\nor{e_j - P_{F^\perp} (e_j)}{} \ = \ \nor{P_{F}(e_j)}{} = \pat{\sum_{i = 1}^b \inner{e_j}{g_i}^2 \nor{g_i}{}^2}^{1/2} \ \leq \ \sqrt{b}\cdot \eps.
	\end{equation}
	Thus the problem reduces to find a family of $d-b$ orthonormal vectors in $F^\perp$ with elements as close as possible to the vectors $P_{F^\perp}(e_j)$, for $j \in \eint{b+1, d}$. This is related to what is known as \textit{procruste} problem. 
	We denote by $A$ the matrix $A := \pat{P_{F^\perp}(e_{b+1}) | \cdots |  P_{F^\perp}(e_{d}) } \in \R^{d\times d-b}$ and we use Theorem 4.1 stated in \cite{Higham89MNPA}:

	\begin{theorem}[\cite{Higham89MNPA}]\label{thm:procruste}
		If $A$ admits a polar decomposition $A = UH$, and if $Q \in \R^{d \times d-b}$ has orthonormal columns, then
		\[  \vertiii{A-U}_2 \ \leq \ \vertiii{A - Q}_2. \]
	\end{theorem}
	Let us show that the columns of $U$ can be chosen in $F^\perp$. A singular value decomposition of $A$ can be written, $A = W D \transp{V}$, where $W$ has orthonormal columns, $V \in \O_{d-b}$, and $D \in \R^{d-b \times d-b}$ is diagonal. Therefore, $A = (W\transp{V}) VD\transp{V}$. Taking $U := W \transp{V} $ and $H := V D \transp{V}$, we have the polar decomposition $A = UH$ where $U$ has orthonormal columns. Because $\ima(A) = \vect \pat{P_{F^\perp}(e_j), j\in \eint{b+1, d}} \subset F^\perp$, it is possible to choose $W$ with columns in $F^\perp$, whence the desired result. 
	
	Now, taking $Q = \pat{e_{b+1} | \cdots | e_d}$, we have, for all unit vector $x \in \R^{d-b}$, 
	\[ P_{F^\perp}(Qx) \ = \ Ax. \]
	Moreover, using that $\abs{\inner{Qx}{g_i}} \leq \nor{Qx}{} \nor{g_i - e_i}{} \leq \eps$ for all $i \in \eint{1, b}$, we finally have
	\[ \nor{Qx - Ax}{}^2 \ = \ \nor{Qx - P_{F^\perp}(Qx)}{}^2 \ \leq \ b\eps^2, \]
	thus $\vertiii{A-Q} \leq \sqrt{b}\cdot \eps$.
	According to Theorem \ref{thm:procruste}, the last inequality is also true if we replace $Q$ by $U$. Because the columns $u_{b+1}, \ldots, u_d$ of $U$ are in $F^\perp$, the family $(g_1, \ldots, g_b, u_{b+1}, \ldots, u_d)$ is orthonormal and moreover satisfies \eqref{eq:procruste} by the triangle inequality.
\end{proof}

\bigskip

\begin{notation}
	Let $b, d \in \N^*$ with $b < d$ and let $\Bon{d}{b}$ be the set of all $b$-tuples of orthonormal vectors in $\R^d$.
\end{notation}

\begin{lemma}\label{lem:net}
	Let $b, d \in \N^*$ with $b \leq d$ and $0 < \eps \leq 1$. Then there exists a set $\G \subset \Bon{d}{b}$ such that for all $e \in \Bon{d}{b}$, there exists $g \in \G$ such that 
	\[ \max_{i \in \eint{1, b}} \nor{e_i - g_i}{2} \ \leq \ \eps \qquad \text{and} \qquad \abs{\G} \ \leq \ \pat{\frac{\pi d}{2}}^{b/2} \pat{\frac8\eps}^{b(d-1)}. \]
\end{lemma}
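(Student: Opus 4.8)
The plan is to build $\G$ by induction on the number $d$ of columns, using the iterated fibre structure $\Bon{n}{d}\to\mathbb{S}_n$, $(e_1,\dots,e_d)\mapsto e_1$, whose fibre over $e_1$ is the Stiefel set of $e_1^{\perp}$. Fix, once and for all and for every $m\le n$, a Euclidean $\delta$-net $\mathcal{N}_m$ of the unit sphere $\mathbb{S}_m$ of $\R^m$, where $\delta$ is a fixed small multiple of $\eps$ to be chosen; and for each $g\in\mathcal{N}_m$ fix the \emph{minimal rotation} $\rho_g\in\O_m$ that sends the first vector $\mathfrak{e}_1$ of the canonical basis of $\R^m$ to $g$ and is the identity on the orthogonal complement of the plane they span, so that $\vertiii{\rho_g-\Id}=\nor{g-\mathfrak{e}_1}{}$ and $\rho_g$ maps $\R^{m-1}\simeq\mathfrak{e}_1^{\perp}$ isometrically onto $g^{\perp}$. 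The net $\G$ of $\Bon{n}{d}$ is then: choose $g_1\in\mathcal{N}_n$, choose $(h_2,\dots,h_d)$ in the already constructed net of $\Bon{n-1}{d-1}$, and output $\pat{g_1,\rho_{g_1}(h_2),\dots,\rho_{g_1}(h_d)}$. Since $\rho_{g_1}$ preserves orthonormality and carries $\mathfrak{e}_1^{\perp}$ onto $g_1^{\perp}$, each output lies in $\Bon{n}{d}$, and $\abs{\G}=\prod_{k=1}^{d}\abs{\mathcal{N}_{n-k+1}}$.

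\textbf{Covering property and the main obstacle.} Given $e=(e_1,\dots,e_d)\in\Bon{n}{d}$, pick $g_1\in\mathcal{N}_n$ with $\nor{e_1-g_1}{}\le\delta$ and let $\sigma_1\in\O_n$ be the minimal rotation taking $e_1$ to $g_1$, so $\vertiii{\sigma_1-\Id}=\nor{e_1-g_1}{}\le\delta$. Then $\pat{\sigma_1 e_2,\dots,\sigma_1 e_d}$ is an orthonormal $(d-1)$-frame inside $g_1^{\perp}$ with $\nor{\sigma_1 e_i-e_i}{}\le\delta$ for every $i$. Applying the construction recursively to this frame (read in $\R^{n-1}$ via $\rho_{g_1}^{-1}$) yields $(g_2,\dots,g_d)$ with, by the induction hypothesis, $\nor{g_i-\sigma_1 e_i}{}\le(i-1)\delta$; hence $\nor{g_i-e_i}{}\le i\,\delta\le d\,\delta$ for all $i\in\eint{1,d}$, and it suffices to take $\delta\le\eps/d$. \emph{The hard part} is that this bare recursion loses a factor $d$: to reach the stated constant one must keep the cumulative error at a fixed multiple of $\delta$ (so that $\delta$ may be taken proportional to $\eps$, not $\eps/d$). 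This is done by exercising the freedom in the choice of each $g_k$: among the net points within $O(\delta)$ of the current column one selects one whose displacement from that column is (nearly) orthogonal to the span of the as-yet-unprocessed columns, so that the perturbations introduced at successive levels are essentially orthogonal and do not add up. Making this selection rigorous---which needs the sphere-nets $\mathcal{N}_m$ to be chosen with a little structure, e.g.\ so that every point of $\mathbb{S}_m$ has nearby net points in all directions---is the technical core of the argument. (One can also bypass the recursion entirely: pick $m_i\in\mathcal{N}_n$ with $\nor{e_i-m_i}{}\le\delta$, form the matrix $M=[\,m_1\mid\dots\mid m_d\,]$ and replace it by the orthogonal factor of its polar decomposition; by Higham's Theorem~\ref{thm:procruste}, exactly as in the proof of Lemma~\ref{lem:procruste}, the resulting orthonormal tuple is within $2\sqrt{d}\,\delta$ of $e$ in operator norm. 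This is immediate, but a priori pays the $\sqrt{d}$ factor.)

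\textbf{Counting.} It remains to bound $\abs{\mathcal{N}_m}=N(\delta,\mathbb{S}_m,\nor{\cdot}{})$. Taking a maximal $\delta$-separated subset of $\mathbb{S}_m$ (which is automatically a $\delta$-net), the open geodesic balls of radius $\delta/2$ about its points are disjoint, so its cardinality is at most $\sigma(\mathbb{S}_m)/\sigma(B_{\mathrm{geo}}(\delta/2))$, where $\sigma$ is the uniform surface measure. With $\sigma(\mathbb{S}_m)=2\pi^{m/2}/\Gamma(m/2)$, $\sigma(B_{\mathrm{geo}}(r))=\sigma(\mathbb{S}_{m-1})\int_0^{r}\sin^{m-2}\theta\,d\theta\ge\sigma(\mathbb{S}_{m-1})(2/\pi)^{m-2}r^{m-1}/(m-1)$ for $r\le\pi/2$ (using $\sin\theta\ge 2\theta/\pi$ on $[0,\pi/2]$, valid since $\delta\le 1<\pi$), together with a Gamma-quotient estimate $\Gamma((m-1)/2)/\Gamma(m/2)=O(m^{-1/2})$, one obtains after simplification a bound of the shape $N(\delta,\mathbb{S}_m,\nor{\cdot}{})\le\pat{\tfrac{\pi m}{2}}^{1/2}\pat{\tfrac{C}{\delta}}^{m-1}$ for an absolute constant $C$ (note that a geodesic $\delta$-net is a Euclidean $\delta$-net since chord $\le$ arc). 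Hence
\[
\abs{\G}=\prod_{k=1}^{d}\abs{\mathcal{N}_{n-k+1}}\ \le\ \prod_{k=1}^{d}\pat{\tfrac{\pi(n-k+1)}{2}}^{1/2}\pat{\tfrac{C}{\delta}}^{\,n-k}\ \le\ \pat{\tfrac{\pi n}{2}}^{d/2}\pat{\tfrac{C}{\delta}}^{\sum_{k=1}^{d}(n-k)} .
\]
Since $\sum_{k=1}^{d}(n-k)=dn-\tfrac{d(d+1)}{2}\le d(n-1)$ and $C/\delta\ge 1$, the exponent may be enlarged to $d(n-1)$; choosing $\delta$ to be the appropriate fixed fraction of $\eps$ so that $C/\delta\le 8/\eps$ yields $\abs{\G}\le\pat{\pi n/2}^{d/2}\pat{8/\eps}^{d(n-1)}$, as claimed. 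Thus the whole proof hinges on the error-control refinement in the second step, the spherical volume computation and the bookkeeping of constants being routine once that is in place.
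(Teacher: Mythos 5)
There is a genuine gap, and it sits exactly where you flag it: the ``error-control refinement'' that would let you take $\delta$ proportional to $\eps$ rather than $\eps/d$ is never proved, and as described it would not suffice anyway. Choosing each $g_k$ so that the successive perturbations are \emph{orthogonal} does not stop them from accumulating: by Pythagoras, $d$ mutually orthogonal displacements of size $\delta$ still add up to $\sqrt{d}\,\delta$, so to get a bound independent of $d$ you would need each level's minimal rotation to literally \emph{fix} the remaining columns, i.e.\ the displacement $g_k - e_k$ must be orthogonal to the span of the unprocessed columns so that the rotation plane avoids it. That requires sphere nets with nearby points in prescribed directions, a construction you neither give nor count; and the counting is where the lemma lives, since any enrichment of the nets feeds straight back into $\abs{\G}$ and into the constant $8$. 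Your rigorous fallback (polar decomposition via Theorem \ref{thm:procruste}) costs a factor $2\sqrt{d}$ in the column-wise error, forcing $\delta \asymp \eps/\sqrt{d}$ and hence an extra factor of order $d^{d(n-1)/2}$ in $\abs{\G}$, which is not within the stated bound. So the proposal, as written, proves a weaker statement (with $\eps/d$ or $\eps/\sqrt{d}$ in place of $\eps$, equivalently a larger constant depending on $d$), not the lemma.

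The paper's proof shows the accumulation problem can be bypassed entirely, with no recursion and no Procrustes correction. Cover $\mathbb{S}^{n-1}$ by a family $\mathcal{T}$ of Euclidean balls of radius $\eps/2$ with $\abs{\mathcal{T}} = N(\eps/2, \mathbb{S}^{n-1}, \nor{\cdot}{2})$, and for every $d$-tuple $(B_1, \ldots, B_d) \in \mathcal{T}^d$ whose product $B_1 \times \cdots \times B_d$ contains at least one element of $\Bon{n}{d}$, select one such element; let $\G$ be the set of selected frames. Given $e \in \Bon{n}{d}$, it lies in some product $B_1 \times \cdots \times B_d$, and the frame $g$ selected for that same product satisfies $\nor{e_i - g_i}{2} \leq \eps$ for every $i$, since $e_i$ and $g_i$ lie in the same ball of radius $\eps/2$. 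Orthonormality of $g$ is automatic (it was selected inside $\Bon{n}{d}$), there is no error to propagate, and $\abs{\G} \leq \abs{\mathcal{T}}^d = N(\eps/2, \mathbb{S}^{n-1}, \nor{\cdot}{2})^d$; the spherical-cap volume estimate (essentially the one you sketch, done with slightly sharper constants) then yields $N(\eps/2, \mathbb{S}^{n-1}, \nor{\cdot}{2}) < \sqrt{\pi n/2}\,(8/\eps)^{n-1}$ and the stated bound. If you want to salvage your fibration approach, the missing directional-net construction is the piece you must supply and count; otherwise the selection trick above is both shorter and sharper.
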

\begin{proof}[Proof of Lemma \ref{lem:net}]
	Let us construct $\G$. Let $\mathcal{T}$ be a set of balls in $\R^d$ with radius $\eps/2$ which cover $\mathbb{S}^{d-1}$ and such that $\abs{\mathcal{T}} = N(\mathbb{S}^{d-1}, \eps/2, \nor{\cdot}{2})$. We denote by $\overline{\mathcal{T}}^b$ the set of $b$-tuples of balls $(B_1, \ldots, B_b) \in \mathcal{T}^b$ such that $B_1 \times \cdots \times B_b$ contains at least one element of $\Bon{d}{b}$. Then, for each $e \in \Bon{d}{b}$, there exists $(B_1, \ldots, B_b) \in \overline{\mathcal{T}}^b$ such that $e \in B_1 \times \cdots \times B_b$.
	For each $B \in \overline{\mathcal{T}}^b$, choose one particular $b$-tuple $g \in \Bon{d}{b}$ such that $g \in B$ and let $\G$ be the set of these $b$-tuples when $B$ runs through $\overline{\mathcal{T}}^b$. It is clear that $\G$ satisfy the first condition of the lemma.
	Moreover, 
	\[ \abs{\G} \ = \ \bigabs{\overline{\mathcal{T}}^b} \ \leq \ \abs{\mathcal{T}^b} \ = \ N\pat{\eps/2, \mathbb{S}^{d-1}, \nor{\cdot}{2}}^b. \]
	Let us estimate the last quantity. We use the inequality 
	\[ N\pat{\eps, \mathbb{S}^{d-1}, \nor{\cdot}{2}} \ \leq \ D\pat{\eps, \mathbb{S}^{d-1}, \nor{\cdot}{2}}, \]
	where $D\pat{\eps, \mathbb{S}^{d-1}, \nor{\cdot}{2}}$ is the maximum number of disjoint balls with radius $\eps /2$ and with center in $\mathbb{S}^{d-1}$. Recall that 
	\[ \mathcal{A} \pat{\mathbb{S}^{d-1}} \ = \ \frac{2\pi^{d/2}}{\Gamma (d/2)} \qquad \text{and} \qquad \mathcal{V}\pat{B_{d-1}(\eps)} \ = \ \frac{\pi^{\frac{d-1}{2}} \eps^{d-1}}{\Gamma \pat{\frac{d+1}{2}}}.\]
	Consider the measure $\nu (\eps/2)$ of the hyperspherical cap defined by the intersection of $\mathbb{S}^{d-1}$ and a ball with center in $\mathbb{S}^{d-1}$ and with radius $\eps/2$.
	The colatitude angle of the cap is $\phi = 2 \arcsin (\eps/4)$ and, according to \cite{Li2011Hspcap}, 
	\[ \nu (\eps/2) \ = \ \frac{(d-1)\pi^{\frac{d-1}{2}}}{\Gamma \pat{\frac{d+1}{2}}} \int_0^\phi \sin^{d-2} (\theta) d\theta. \]
	Since $\phi \geq \eps/2$, 
	\[ \int _0^\phi \sin^{d-2} (\theta) d\theta \ \geq \ \int _0^\phi \pat{ \frac{\sin \phi}{\phi} \cdot \theta}^{d-2} d\theta  \ = \ \pat{\frac{\sin \phi}{\phi}}^{d-2} \frac{\phi^{d-1}}{d-1} \ \geq \ \pat{\frac{\sin \phi}{\phi}}^{d-2} \frac{1}{d-1} \pat{\frac{\eps}{2}}^{d-1} \]
	and, using the facts that $\eps \leq 1$, $\phi \leq \eps$, and $(\sin \phi) / \phi \geq 1/2$, we have

	\[ D\pat{\eps, \mathbb{S}^{d-1}, \nor{\cdot}{2}} \ \leq \ \frac{\mathcal{A} (\mathbb{S}^{d-1})}{\nu (\eps/2)} \ < \ \frac{\mathcal{A} (\mathbb{S}^{d-1})}{\mathcal{V} \pat{B_{d-1} (\eps/2)}\cdot \pat{\frac12}^{d-2}} \ = \ \sqrt{\pi} \cdot  \pat{\frac{4}{\eps}}^{d-1} \cdot \frac{\Gamma \pat{\frac{d+1}{2}}}{\Gamma (d/2)}.\]
	The ratio of two Gamma functions can be bounded as follows 
	\[ \sqrt{x + 1/4} \ < \ \frac{\Gamma (x+1)}{\Gamma (x + 1/2)} \ < \ \sqrt{x + 1/2}, \]
	for $x > -1/2$ (see \cite{Watson1959Gamma} and \cite{Luo&Qi2012Gamma}, Section 2.3).
	Choosing $x = (d-1)/2$, we obtain 
	\[ N\pat{\eps, \mathbb{S}^{d-1}, \nor{\cdot}{2}} \ < \ \sqrt{\frac{\pi d}{2}} \cdot \pat{\frac4\eps}^{d-1},\]
	hence the result.
\end{proof}

\bigskip

Let $b, d \in \N^*$ with $b < d$ and $q^* \in \O_d$. For $\eps > 0$, we define
\[ \Q_{q^*, \eps} \ := \ \{ q \in \O_d : \exists q' \in \O_d((q^*)^{-1}(E_{\b})), \ \vertiii{q^* q' -q} \leq \eps \}. \]
When establishing the prior mass condition, we need a lower bound on the probability $\Prob (\Theta \in \Q_{q^*, \eps})$ where $\Theta$ is the unit Haar measure on $\O_d$.
Since the Haar measure is translation invariant, it is sufficient to cover $\O_d$ with translations of $\Q_{q^*, \eps}$, that is, to cover $\O_d$ with sets $\overline{q} \Q_{q^*, \eps}$ where $\overline{q}$ belongs to some net $\mathcal{R} \subset \O_d$ and then remark that $\Prob (\Theta \in \Q_{q^*, \eps}) \geq 1/\abs{\mathcal{R}}$.

\begin{lemma}\label{lem:measure1}
	We have, 
	\[ \Prob (\Theta \in \Q_{q^*, \eps}) \ \geq \ \pat{\frac{2}{ \pi d}}^{\tfrac{b}{2}} \cdot \pat{\frac{\eps}{16\sqrt{bd}}}^{b (d-1)}.\]
\end{lemma}
\begin{proof}[Proof of Lemma \ref{lem:measure1}]
	Let $q'' \in \O_d$. The first step consists in constructing a net $\mathcal{R} \subset \O_d$ such that there exist $\overline{q} \in \mathcal{R}$ and $q \in \Q_{q^*, \eps}$ with $q'' = \overline{q} q$.
	Let $(u_1, \ldots, u_b, u_{b+1}, \ldots, u_d)$ be an orthonormal basis adapted to the direct sum $\R^d = (q^*)^{-1} (E_{\b}) \ \overset{\perp}{\bigoplus} \ (q^*)^{-1} (E_{1-\b})$.
	
	For all $b$-tuple of orthonormal vectors $g = (g_1, \ldots, g_b)$, we fix $r_g \in \O_d$ an isometry such that $r_g (q^* u_i) = g_i$ for all $i \in \eint{1, b}$.
	Moreover, we denote by $\G$ a set of $b$-tuples of orthonormal vectors in $\R^d$ such that, for all $b$-tuples $f = (f_1, \ldots, f_b)$ of orthonormal vectors, there exists $g \in \G$ satisfying
	\[ \sup _{i \in \eint{1, b}} \nor{g_i - f_i}{} \ \leq \ \frac{\eps}{2 \sqrt{b d}}. \]
	We claim that we can take $\mathcal{R} := \{ r_g : g \in \G\}$. Indeed, there exists $g \in \G$ such that
	\[ \sup _{i \in \eint{1, b}} \nor{g_i - q''(u_i)}{} \ \leq \ \frac{\eps}{2 \sqrt{b d}}. \]
	By Lemma \ref{lem:procruste}, we can extend $g$ in an orthonormal basis of $\R^d$ such that
	\begin{equation}\label{eq:basis1}
		\sup _{j \in \eint{1, d}} \nor{g_j - q''(u_j)}{} \ \leq \ \frac{\eps}{\sqrt{d}}.
	\end{equation}
	Then, writing $\overline{q} = r_g$ and taking $q$ such that $q(u_j) = r_g^{-1} (q'' u_j)$ for all $j \in \eint{1, d}$, we have $q'' = \overline{q}q$.
	Moreover, because $r_g^{-1} (g_j) \in E_{1-\b}$ and $(q^*)^{-1} r_g^{-1} (g_j) \in (q^*)^{-1} (E_{1-\b})$ for $j \in \eint{b+1, d}$, we can define $q'$ such that 
	\[ \begin{cases} 
		q'(u_i) = u_i, & \text{if } i \in \eint{1, b}, \\
		q'(u_j) = (q^*)^{-1} r_g^{-1} (g_j), & \text{if } j \in \eint{b +1, d}.
	\end{cases} \]
	Then, we have $q' \in \O_{d}((q^*)^{-1}(E_{\b}))$ and according to \eqref{eq:basis1}, 
	\[ \nor{q^* q'(u_i) - q(u_i)}{} \ = \ \nor{q^* (u_i) - r_g^{-1} (q'' u_i)}{} \ = \ \nor{r_g q^* (u_i) - q''(u_i)}{} \ \leq \ \frac{\eps}{\sqrt{d}}, \qquad \text{for } i \in \eint{1, b}, \]
	and, 
	\[ \nor{q^* q'(u_j) - q(u_j)}{} \ = \ \nor{r_g^{-1} (g_j) - q(u_j)}{} \ = \ \nor{g_j - r_g (q u_j)}{} \ \leq \ \frac{\eps}{\sqrt{d}}, \qquad \text{for } j \in \eint{b +1, d}. \]
	So $\vertiii{q^* q' - q} \leq \eps$ and the net $\mathcal{R} := \{ r_g : g \in \G \}$ is appropriate.
	Finally, by taking $\G$ as in Lemma \ref{lem:net}, we obtain
	\[ \abs{\mathcal{R}} \ \leq \ \pat{\frac{\pi d}{2}}^{\tfrac{b}{2}} \cdot \pat{\frac{16\sqrt{b d}}{\eps}}^{b (d-1)}, \]
	hence the result.
\end{proof}

\bigskip

\begin{lemma}\label{lem:netEntropy}
	Let $b, d \in \N^*$ be two integers such that $b < d$. For $\zeta > 0$, there exists a net $\mathcal{R}$ over $\O_d$ such that
	\[ \bigcup _{\overline{q} \in \mathcal{R}} \mathcal{A}_{\overline{q}} \ = \ \O_d, \]
	where 
	\[ \mathcal{A}_{\overline{q}} \ := \ \{ q \in \O_d \ | \ \exists q' \in \O_d (q^{-1} (E_{\b})), \ \vertiii{qq' - \overline{q}} \leq \zeta \}, \]
	and such that 
	\[ \abs{\mathcal{R}} \ \leq \ \pat{\frac{\pi\sqrt{bd}}{2}}^b \pat{\frac{16\sqrt{bd}}{\zeta}}^{b(d + b-2)}.\]
\end{lemma}
\begin{proof}
	Firstly, we remark that 
	\[ \mathcal{A}_{\overline{q}} \ = \ \bigacc{q \in \O_d \ | \ \exists q'' \in \O_d, \ q''_{|q^{-1}(E_{\b})} = q_{|q^{-1}(E_{\b})} \ \text{and} \ \vertiii{q'' - \overline{q}} \leq \zeta }. \]
	Thus, for $q \in \O_d$, we aim to construct $\overline{q}$ such that there exists $q'' \in \O_d$ satisfying $q''_{|q^{-1}(E_{\b})} = q_{|q^{-1}(E_{\b})}$ and $\vertiii{q'' - \overline{q}} \leq \zeta$.
	
	Let $(u_1, \ldots, u_b, u_{b+1}, \ldots, u_d)$ be an orthonormal basis adapted to the direct sum $\R^d =$ $ (q)^{-1} (E_{\b}) \ $ $\overset{\perp}{\bigoplus} \ (q)^{-1} (E_{1-\b})$.
	We introduce $\mathcal{F}$ a set of orthonormal basis of $E_{\b}$ such that, for all orthonormal basis $f'$ of $E_{\b}$, there exists $f \in \mathcal{F}$ such that 
	\[ \sup _{i \in \eint{1, b}} \nor{f_i - f'_i}{} \ \leq \ \frac{\zeta}{2\sqrt{bd}}, \]
	and we reuse the set $\G$ of Lemma \ref{lem:measure1}, replacing $\eps$ by $\zeta$.
	For all $g \in \G$ and $f \in \mathcal{F}$, we fix an isometry $r_{g, f} \in \O_d$ such that $r_{g, f}(g_i) = f_i$, for all $i \in \eint{1, b}$.
	
	By construction, there exist $f \in \mathcal{F}$ and $g \in \G$ such that 
	\[ \sup _{i \in \eint{1, b}} \nor{f_i - q(u_i)}{} \ \leq \ \frac{\zeta}{2\sqrt{bd}} \quad \text{and} \quad \sup _{i \in \eint{1, b}} \nor{g_i - u_i}{} \ \leq \ \frac{\zeta}{2\sqrt{bd}}. \]
	Then we choose $\overline{q} = r_{g, f}$. Using Lemma \ref{lem:procruste}, we extend $g$ to an orthonormal basis over $\R^d$ such that $\sup _{j \in \eint{1, d}} \nor{g_j - u_j}{} \ \leq \ \zeta/\sqrt{d}$ and we define $f_j := r_{g, f}(g_j) \in E_{\b}^\perp$, for $j \in \eint{b+1, d}$. Now we choose $q'' \in \O_d$ such that 
	\[ \begin{cases} 
		q''(u_i) = q(u_i) & \text{if } i \in \eint{1, b}, \\
		q''(u_j) = f_j & \text{if } j \in \eint{b +1, d}.
	\end{cases} \]
	This leads to $\nor{q''(u_j) - \overline{q}(u_j)}{} \leq \zeta/\sqrt{d}$, for all $j \in \eint{1, d}$, hence $\vertiii{q'' - \overline{q}} \leq \zeta$.
	We can thus define the net $\mathcal{R}$ as the set of all isometries $r_{g, f}$ for $g \in \G$ and $f \in \mathcal{F}$. According to Lemma \ref{lem:net}, this yields the upper bound
	\begin{align*}
		\abs{\mathcal{R}} \ & = \ \abs{\G} \cdot \abs{\mathcal{F}} \\[0.2cm]
		& \leq \ \pat{\frac{\pi d}{2}}^{\tfrac{b}{2}} \pat{\frac{16\sqrt{bd}}{\zeta}}^{b(d-1)} \pat{\frac{\pi b}{2}}^{\tfrac{b}{2}} \pat{\frac{16\sqrt{b d}}{\zeta}}^{b(b-1)}. \qedhere
	\end{align*}
\end{proof}

\smallskip



\renewcommand{\mkbibnamefamily}[1]{\textsc{#1}} 

\renewbibmacro*{volume+number+eid}{%
	\printfield{volume}%
	\printfield{number}%
	\setunit{\addcomma\space}%
	\printfield{eid}}
\DeclareFieldFormat[article]{number}{\textbf{\mkbibparens{#1}}}
\DeclareFieldFormat[article]{volume}{\textbf{#1}}

\printbibliography

\end{document}